\documentclass[letter,12pt]{article}

\usepackage[english]{babel}
\usepackage[utf8]{inputenc}

\usepackage[margin=1.2 in, top=1 in, bottom= 1.2 in]{geometry}

\makeatletter
\g@addto@macro\normalsize{%
  \setlength\abovedisplayskip{7pt}
  \setlength\belowdisplayskip{7pt}
  \setlength\abovedisplayshortskip{7pt}
  \setlength\belowdisplayshortskip{7pt}
}
\makeatother

\usepackage{tocloft}

\usepackage{amsfonts}
\usepackage{mathrsfs}
\usepackage{bbm}
\usepackage{latexsym}
\usepackage{math dots}
\usepackage{amssymb}
\usepackage{mathtools}
\usepackage{relsize}
\usepackage{lipsum}

\usepackage{todonotes}
\usepackage{enumitem}
\setlist{nolistsep} 	
\usepackage{amsthm}

\usepackage{xcolor}
\definecolor{Color1}{rgb}{0.0, 0.42, 0.47}
\definecolor{Color2}{rgb}{0.78, 0.11, 0.0}

\usepackage{titlesec}
\titleformat{\section}
  {\large\center\bfseries}
  {\thesection.}{.7em}{}
\titlespacing*{\section}{0pt}{3.5ex plus 0ex minus 0ex}{1.5ex plus 0ex}
\titleformat{\subsection}
  {\center\bfseries}
  {\thesubsection.}{.7em}{}
\titlespacing*{\subsection}{0pt}{3.5ex plus 0ex minus 0ex}{1.5ex plus 0ex}
\titleformat{\subsubsection}
  {\center\bfseries}
  {\thesubsubsection.}{.7em}{}
\titlespacing*{\subsubsection}{0pt}{3.5ex plus 0ex minus 0ex}{1.5ex plus 0ex}

\addto\captionsenglish{}

\usepackage{titling}
\makeatletter
\renewenvironment{abstract}{
\begin{center}
{\bfseries \large\abstractname\vspace{\z@}}
\end{center}
\quotation
}
\makeatother
\usepackage[linktocpage=true]{hyperref}
\usepackage{aliascnt}
\usepackage[capitalize]{cleveref}

\hypersetup{
    citecolor = Color2,
    colorlinks,
    linkcolor = black,
    urlcolor = Color2
}

\usepackage{aliascnt}

\theoremstyle{plain}

\newtheorem{Theorem}{Theorem}[section]

\newaliascnt{Fact}{Theorem}

\aliascntresetthe{Fact}

\newaliascnt{Lemma}{Theorem}
\newtheorem{Lemma}[Lemma]{Lemma}
\aliascntresetthe{Lemma}

\newaliascnt{Proposition}{Theorem}
\newtheorem{Proposition}[Proposition]{Proposition}
\aliascntresetthe{Proposition}

\newaliascnt{Corollary}{Theorem}
\newtheorem{Corollary}[Corollary]{Corollary}
\aliascntresetthe{Corollary}

\newaliascnt{Conjecture}{Theorem}

\aliascntresetthe{Conjecture}

\newaliascnt{Question}{Theorem}

\aliascntresetthe{Question}

\theoremstyle{definition}

\newaliascnt{Definition}{Theorem}
\newtheorem{Definition}[Definition]{Definition}
\aliascntresetthe{Definition}

\newaliascnt{Remark}{Theorem}
\newtheorem{Remark}[Remark]{Remark}
\aliascntresetthe{Remark}

\newaliascnt{Example}{Theorem}

\aliascntresetthe{Example}

\newaliascnt{Claim}{Theorem}

\aliascntresetthe{Claim}

\usepackage{chngcntr}
\counterwithin{Theorem}{section}

\numberwithin{equation}{section}

\allowdisplaybreaks

\newcommand{\Folner}{F\o{}lner}

\newcommand{\Oh}{{\rm O}}

\newcommand{\N}{\mathbb{N}}
\newcommand{\Z}{\mathbb{Z}}
\newcommand{\R}{\mathbb{R}}
\newcommand{\C}{\mathbb{C}}
\newcommand{\Q}{\mathbb{Q}}
\newcommand{\T}{\mathbb{T}}
\newcommand{\A}{\mathcal{A}}

\renewcommand{\epsilon}{\varepsilon}
\renewcommand{\leq}{\leqslant}
\renewcommand{\geq}{\geqslant}

\renewcommand{\P}{\mathbb{P}}
\renewcommand{\subset}{\subseteq}

\newcommand{\F}{\mathcal{F}}

\newcommand{\E}{\mathbb{E}}
\newcommand{\B}{\mathcal{B}}
\newcommand{\1}{1}

\usepackage[normalem]{ulem}

\usepackage[makeroom]{cancel}

\DeclareMathSymbol{\shortminus}{\mathbin}{AMSa}{"39}
\newcommand{\e}{\varepsilon}
\newcommand{\norm}[1]{\left\Vert #1 \right\Vert}

\newcommand{\wt}{\widetilde}

\usepackage{pifont}
\newcommand{\plustimes}{\text{\ding{71}}}

\author{By~~{\scshape Florian~K.~Richter}~~and~~{\scshape Konstantinos Tsinas}}
\date{\small \today}
\title{\bfseries Density regularity of $\{x,x+y,xy\}$ in the integers}

\begin{document}

\maketitle
\begin{abstract}
\noindent Fix $s\in\N$.
We prove that there exists a subadditive density on $\N$ such that, for every polynomial $P\in\Z[y]$ with $P(0)=0$, every set of positive density contains configurations $\{x,x+P(y),xy^s\}$
for arbitrarily large $x>y\geq 2$.
When $s=1$, this strengthens Moreira's partition-regularity result for $\{x,x+P(y),xy\}$ to a density theorem, and when $s>1$ this yields new partition-regularity results.
\end{abstract}

\tableofcontents
\thispagestyle{empty}



\section{Introduction}

In \cite{Hindman79a}, Hindman conjectured that every finite coloring of $\N=\{1,2,3,\ldots\}$ admits infinitely many pairs $x,y\in\N$ for which $\{x,y,x+y,xy\}$ is monochromatic.
This remains a central open problem in arithmetic Ramsey theory.
A first major breakthrough was obtained by Moreira \cite{Moreira17}, who proved that one can always find infinitely many $x,y\in\N$ such that $\{x,x+y,xy\}$ is monochromatic.
His proof combines topological recurrence with an inductive argument in the spirit of the topological proof of the polynomial van der Waerden theorem \cite{BL96}.
Moreira's method has since been refined and adapted to several other related sum--product problems; see \cite{BLM21, PRE21, Alweiss24, BS24, B25, B26, AG25, Alweiss26}.

A different approach to monochromatic sum--product patterns was developed in \cite{Richter26}, yielding an analytic proof that every finite coloring of $\N$ contains a monochromatic pair $\{x+y,xy\}$.
The key idea is to first prove a density theorem and then deduce the coloring result from it.
The proof of the density theorem relies crucially on a new type of subadditive density, which naturally gives rise to averages of mixed additive--multiplicative correlations.
These averages are then analyzed using methods from Fourier analysis and ergodic theory.

The purpose of the present paper is to develop this analytic framework further in order to treat three-point configurations such as $\{x,x+y,xy\}$.
More precisely, we prove a density theorem for configurations of the form
\begin{equation*}
\{x,x+P(y),xy^s\},
\end{equation*}
where $s\in\N$ and $P\in\Z[y]$ satisfies $P(0)=0$.
A central new ingredient is a localized version of the density introduced in \cite{Richter26}, and working with it requires new structural arguments from ergodic theory.

\subsection{Statement of main results}

Let $2^\N$ denote the power set of $\N$.
A \emph{subadditive density} on $\N$ is a function $D\colon 2^\N\to[0,1]$ satisfying $D(\emptyset)=0$ and $D(\N)=1$, which is monotone, in the sense that $D(A)\leq D(B)$ whenever $A\subseteq B$, and subadditive, in the sense that $D(A\cup B)\leq D(A)+D(B)$ for all $A,B\subseteq\N$.

Our main result is the following.

\begin{Theorem}\label{Thm_maindensity}
Fix $s\in\N$.
There exists a subadditive density $d^{\plustimes}$ on $\N$ such that, for every $E\subseteq\N$ with $d^{\plustimes}(E)>0$ and every $P\in\Z[y]$ satisfying $P(0)=0$, there exist $x>y\geq 2$ such that
\begin{equation*}
\{x,x+P(y),xy^s\}\subseteq E.
\end{equation*}
\end{Theorem}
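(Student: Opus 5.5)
The plan is to follow the strategy of \cite{Richter26}, adapted to three-point patterns. First I will construct $d^{\plustimes}$ as a localized version of the additive--multiplicative density from \cite{Richter26}. Fix a sequence of scales $N_k\to\infty$. For each $k$, take weights on $[1,N_k]$ that are averages of dilated additive \Folner{} sets: intervals $n\cdot[M, M+L]$ with $n$ drawn from a multiplicative averaging scheme over products of primes in a window. Then set
\[
d^{\plustimes}(E)=\sup_{\text{schemes}}\limsup_{k\to\infty}\,\mu_k(E).
\]
The weights $\mu_k$ are chosen to be approximately invariant both under shifts $x\mapsto x+h$ with $h$ small relative to the scale, and under dilations $x\mapsto px$ by primes $p$ in the window. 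Monotonicity, subadditivity and the normalizations $D(\emptyset)=0$, $D(\N)=1$ follow directly from the definition as a $\limsup$ of probability measures, with a supremum taken over admissible schemes.

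The heart of the proof is a positivity statement. I would aim to show that
\[
\liminf\; \E_{y}\,\E_{x\sim \mu_k} 1_E(x)\,1_E(x+P(y))\,1_E(xy^s)>0,
\]
where $y$ ranges over a suitable set of integers (for instance products of primes from the multiplicative window, weighted appropriately). Positivity of this average yields infinitely many configurations, and hence ones with $x>y\ge 2$.

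To analyze the average, substitute $x=y^{-s}\cdot x'$ formally, or more precisely use the approximate dilation invariance of $\mu_k$ to rewrite the term $1_E(xy^s)$. This turns the average into a mixed correlation of an additive shift along $P(y)$ with a multiplicative shift by $y^s$. Via a Furstenberg-type correspondence for the density, I would pass to a measure-preserving action of the semidirect product of affine maps on a probability space $(X,\mu)$. There we need
\[
\int 1_A\cdot T^{P(y)}1_A\cdot S_{y^s}1_A\,d\mu,
\]
averaged over $y$, to be positive whenever $\mu(A)>0$.

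Next I would decompose $1_A$ relative to a suitable characteristic factor: show that the averages are controlled by a factor on which the additive action is (pro-)nilpotent or, more likely, rational/Kronecker. Use the localization to ensure that the multiplicative averaging forces the additive action on that factor to be essentially trivial along $P(y)$ modulo a finite-index subgroup. Then choose $y$ in a set where $P(y)\equiv 0$ modulo a large modulus and $y^s$ lies in the multiplicative window. On the structured part, $T^{P(y)}f\approx f$, and the average reduces to $\int f\cdot S_{y^s}f$. Its average over $y$ is positive by a van der Corput/mean-ergodic argument for the multiplicative action, since $\int f\cdot \E(f\mid \text{invariant})\ge(\int f)^2$.

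The main obstacle is the characteristic-factor step: proving that the uniform (non-structured) component contributes negligibly to the mixed additive--multiplicative correlation. I would first try a van der Corput differencing in $y$ combined with the approximate invariance of $\mu_k$ under dilations. This should bound the contribution by local Gowers-type seminorms of $f$ along the additive action, after which Fourier analysis on the localized intervals (major/minor arc decomposition for $P(y)$) should handle the remaining terms.
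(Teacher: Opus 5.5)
There is a genuine gap, and it is at the correspondence step; it is structural rather than technical. Your high-level philosophy is right and matches the paper: a Kronecker-type structured part supplies recurrence via almost periodicity, and a uniform part is shown to be negligible.

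The problem is that you ask for weights $\mu_k$ approximately invariant under both $x\mapsto x+h$ and $x\mapsto px$, and then for a measure-preserving action of the affine semigroup in which $\mu(A)$ reflects the density of $E$. No such object exists on $\N$. For $E=p\N$, approximate shift invariance makes $\mu_k(E)$ small (about $1/p$), while $E/p=\N$ has full mass. Already the algebra $\{\emptyset,\,2\N,\,\N\setminus 2\N,\,\N\}$, which is closed under affine pullbacks, carries no mean invariant under both $n\mapsto n+1$ and $n\mapsto 2n$.

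This non-amenability is exactly what the paper's construction sidesteps. It never builds a multiplicative action:
\begin{itemize}
\item $d^{\plustimes}$ nests an inner shift-invariant mean $\beta$ in $n$ inside an outer mean $\gamma$ in the dilation index $m$.
\item \cref{prop_correspondence principle} then yields only an additive system $(X,\mu,T)$, together with the family $F_m=\Psi(n\mapsto 1_E(m^sn))$.
\item The pattern is captured by writing $x=a^sn$ and $y=m/a$ with $a^{s+1}\mid m$. Then $x+P(y)=a^s\big(n+P(m/a)/a^s\big)$ and $xy^s=m^sn$.
\end{itemize}
The problem becomes $\int F_a\cdot T^{P(m/a)/a^s}F_a\cdot F_m\,d\mu>0$, in which only additive shifts act. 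Your formal substitution $x=y^{-s}x'$ is the non-integral shadow of this parametrization, but as stated it produces no integer configurations. It is this parametrization, not an affine action, that makes a purely additive correspondence principle sufficient.

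Two further ingredients are missing, and both are forced.
\begin{itemize}
\item \textbf{Ergodicity of the inner additive average.} \cref{Remark: counterexample} (the skew product $T(x,y)=(x,x+y)$ on $\T^2$) shows that the required recurrence statement is false for non-ergodic systems. The characteristic-factor input you would need is that the Kronecker factor controls $\E_m e(Q(m)\theta)\,T^{P_1(m)}F\cdot T^{P_2(m)}G$. For dependent $P_1,P_2$ this rests on Frantzikinakis's theorem, which uses ergodicity. A supremum over schemes of $\limsup$'s of interval averages has no mechanism to enforce this. That is why the paper takes $\beta$ ergodic and proves that ergodicity survives restriction to subalgebras.
\item \textbf{Localization to polynomial Bohr$_0$ sets.} After Cauchy--Schwarz in the dilation variable, the iterates are typically dependent; for $P(y)=y$ they are $b^{s+1}m$ and $a^{s+1}m$. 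So the full Kronecker factor, irrational eigenvalues included, is characteristic. Congruence conditions $P(y)\equiv 0 \pmod q$ do not make $T^{P(y)}f\approx f$ on it. One needs $m$ in polynomial Bohr$_0$ sets \emph{and} positivity of the density of $E$ restricted to those sets. This is precisely why $\limsup_{B\in\mathrm{PB}_0}$ is built into $d^{\plustimes}$.
\end{itemize}

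Finally, the step you flag as the main obstacle is where the real content lies. Van der Corput with approximate dilation invariance and local Gowers norms does not indicate how to get it, and it relies on an invariance that does not exist. The paper instead:
\begin{itemize}
\item applies Cauchy--Schwarz in $a$;
\item substitutes $m\mapsto(ab)^{s+1}m$ to obtain essentially distinct integer polynomials $P_a,P_b$;
\item replaces $1_{B_\ell}$ by polynomial phases and invokes \cref{prop_optimal factor for 2 polynomials}.
\end{itemize}
The structured term is then bounded below by $\delta^4$ via Ramsey's theorem and Cauchy--Schwarz over the index $m$. It does not come from a mean ergodic theorem for a multiplicative action, which you do not have.
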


We provide an explicit construction of the density $d^{\plustimes}$, but it requires, among other ingredients, the notion of an ergodic shift-invariant mean on $\N$.
We therefore postpone its precise construction to \cref{sec_density_construction}.

Subadditivity implies that every finite coloring of $\N$ has a color class of positive $d^{\plustimes}$-density.
Therefore, \cref{Thm_maindensity} immediately gives the following coloring result.

\begin{Corollary}\label{Cor_main}
For every finite coloring of $\N$, every $s\in\N$, and every $P\in\Z[y]$ satisfying $P(0)=0$, there exist infinitely many $x,y$ with $x>y\geq 2$ such that $\{x,x+P(y),xy^s\}$ is monochromatic.
\end{Corollary}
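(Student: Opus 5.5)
The plan is to deduce \cref{Cor_main} from \cref{Thm_maindensity} using only subadditivity of $d^{\plustimes}$, together with one extra step that upgrades ``there exist $x>y\geq 2$'' to ``there exist infinitely many such pairs.'' Fix $s$ and $P$, and let $\N=C_1\cup\dots\cup C_r$ be a finite coloring.

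\textbf{Step 1: a color class of positive density.} Since $d^{\plustimes}(\N)=1$, monotonicity and subadditivity give
\begin{equation*}
1\leq \sum_{i=1}^r d^{\plustimes}(C_i),
\end{equation*}
so some $C_i$ has $d^{\plustimes}(C_i)\geq 1/r>0$. Applying \cref{Thm_maindensity} to $E=C_i$ already produces one monochromatic configuration $\{x,x+P(y),xy^s\}$ with $x>y\geq 2$.

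\textbf{Step 2: infinitely many configurations.} Suppose only finitely many pairs $(x,y)$ with $x>y\geq 2$ give a monochromatic configuration, and let $N$ exceed every $x$ occurring among them. Refine the coloring by giving each of the finitely many integers $1,\dots,N$ its own new color, while $C_1\setminus[1,N],\dots,C_r\setminus[1,N]$ keep their old colors. This is still a finite coloring, so Step 1 yields a monochromatic configuration for the refined coloring. Because $x>y\geq 2$ forces $x\geq 3$, we have $xy^s\neq x$, so the configuration contains two distinct elements. A singleton class cannot contain two distinct elements, so the configuration lies in some $C_j\setminus[1,N]$. Hence $x>N$, and the configuration is also monochromatic for the original coloring. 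This contradicts the choice of $N$.

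Note that $P(y)$ could in principle vanish, in which case $x=x+P(y)$. This causes no trouble, since $x$ and $xy^s$ are always distinct.

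An alternative to Step 2 is to argue that finite sets have $d^{\plustimes}$-density zero, so that $d^{\plustimes}(C_i\setminus[1,N])>0$ by subadditivity. That route depends on the construction in \cref{sec_density_construction}, whereas the refinement trick above needs nothing beyond the statement of \cref{Thm_maindensity}. I therefore expect no real obstacle. The only point needing care is checking that the configuration always has at least two distinct elements, which is exactly what guarantees it avoids the singleton colors.
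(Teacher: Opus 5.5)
Your proof is correct and follows the paper's deduction: subadditivity of $d^{\plustimes}$ forces some color class to have positive density, and \cref{Thm_maindensity} is then applied to that class. The paper states this in one line and leaves the ``infinitely many'' part implicit. Your refinement trick, which isolates $1,\dots,N$ as singleton colors and uses $x\neq xy^s$, is a clean, self-contained way to supply that step using only the statement of the theorem.
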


When $s=1$, \cref{Cor_main} recovers the polynomial form of Moreira's theorem \cite{Moreira17}; in particular, taking $P(y)=y$ yields the configuration $\{x,x+y,xy\}$.
For $s>1$, it yields partition-regular patterns that have not been observed before.

Our proof of \cref{Thm_maindensity} entails studying averages of three-point mixed additive--multiplicative correlations.
To study these averages, we decompose the underlying correlations into structured and weak-mixing components.
The structured component supplies the required recurrence, while the contribution of the weak-mixing component is shown to be negligible. 
In particular, our proof provides new insight into how additive and multiplicative recurrence interact.
This approach is flexible, and we expect it to be useful beyond the configurations that we treat in this paper.

Our correlation-based approach also offers a possible route to quantitative results.
Green and Sawhney \cite{GS25} proved that, for all sufficiently large $r$, every $r$-coloring of $[N]=\{1,\ldots,N\}$ contains a monochromatic copy of $\{x+y,xy\}$ whenever
\begin{equation*}
N\geq \exp\exp(r^{50}).
\end{equation*}
Their argument builds on some of the ideas from the analytic proof of the partition regularity of $\{x+y,xy\}$ given in \cite{Richter26}.
It is therefore natural to ask whether the methods developed in the present paper can likewise be made effective, leading to quantitative bounds for configurations such as $\{x,x+y,xy\}$.

Finally, our analytic proof of the partition regularity of $\{x,x+y,xy\}$ may be relevant to Hindman's conjecture itself.
In \cite{GS16}, Green and Sanders proved the finite-field analogue of Hindman's conjecture, and 
their proof suggests a possible strategy of how to approach the conjecture in the integers: first develop robust analytic control of the three-point correlations associated with $\{x,x+y,xy\}$, and then combine this with additional Ramsey-theoretic and structural input to force the full configuration $\{x,y,x+y,xy\}$.
The integer setting presents serious additional difficulties, most notably the non-amenability of the affine semigroup of the integers.
Nevertheless, by passing from the two-point configuration $\{x+y,xy\}$ to the genuine three-point configuration $\{x,x+y,xy\}$, the present paper moves towards the kind of analytic control that such a strategy would require.
Thus, the main theorem should be viewed both as a new partition-regularity result and as a step towards a broader analytic theory of the interaction between addition and multiplication in the integers.

\subsection{Construction of density}
\label{sec_density_construction}

Let $\ell^{\infty}(\N)$ denote the space of all complex-valued and bounded functions on $\N$, which forms a unital $C^{*}$-algebra under pointwise addition and multiplication.
We define the shift operator $\sigma\colon \ell^{\infty}(\N)\to \ell^{\infty}(\N)$ and the dilation operators $\tau_m\colon \ell^{\infty}(\N)\to \ell^{\infty}(\N)$, for $m\in \N$, via the formulas \begin{equation}\label{eq_definition of shift and dilation operators}
(\sigma f)(n)=f(n+1)\ \text{and } (\tau_m f)(n)=f(mn),\qquad n\in \N,~f\in\ell^\infty(\N). 
\end{equation} It is easy to check that these operators are bounded.

We need a notion of averaging that is insensitive to shifting, in the sense that the sequences $f$ and $\sigma f$ are assigned the same average. 

\begin{Definition}[Shift-invariant means]\label{D: invariant means}
A \emph{mean} on $\ell^{\infty}(\N)$ is a linear functional $\beta:\ell^\infty(\N)\to \C$, such that:
\begin{enumerate}
[label=(\roman{enumi}),ref=(\roman{enumi}),leftmargin=*]
\item $\beta$ is positive, i.e.,  $f\geq 0$ implies $\beta(f)\geq 0$;

\item $\beta$ is normalized, i.e., $\beta(\1_\N)=1$.
\end{enumerate}
A mean is called \emph{shift-invariant} if $\beta(\sigma f)=\beta(f)$ for all $f\in\ell^\infty(\N)$.
The collection of shift-invariant means is a convex and weak$^{*}$ compact set. We call the extreme points of this set \emph{ergodic (shift-invariant) means}. 
\end{Definition}

We also need to introduce the family of polynomial Bohr$_0$ sets, as well as a way to take a limit over this family. 
\begin{Definition}[Polynomial Bohr sets]
   A polynomial Bohr$_0$ set is a set of the form
    \begin{equation*}
        \left\{n\in \N\colon |\lambda_i^{P_i(n)}-1|\leq \e_i\  \text{for all } i\in [k]\right\}
    \end{equation*}for some  $k\in \N$, $\e_1,\ldots, \e_k>0$, $\lambda_1,\ldots, \lambda_k \in \mathbb{S}^1$ and polynomials $P_1,\ldots, P_k\in \Z[x]$ with zero constant terms.
\end{Definition}

Note that the family of polynomial Bohr$_0$ sets is a \emph{$\pi$-system} on $\N$, which means that it is closed under finite intersections. We denote this $\pi$-system by $\text{PB}_0$ for brevity.

\begin{Definition}[Limit superior along $\pi$-systems]
Suppose $\mathcal{F}$ is a $\pi$-system on $\N$. Given a bounded function $f\colon \mathcal{F}\to\R$, we define
\[
\limsup_{A\in \mathcal{F}} f(A)= \inf_{B\in\mathcal{F}}\ \sup_{C\in \F} f(C\cap B).
\]
\end{Definition}

When averaging relative to a subset of $\N$, we will use the following normalization.
If $b\subseteq\N$ is a set and $\gamma$ is a mean on $\ell^\infty(\N)$ with $\gamma(\1_B)\neq 0$, we define
\begin{equation}\label{eq:relativized-mean}
    \gamma_B(f):=\frac{\gamma(f\cdot \1_B)}{\gamma(\1_B)}.
\end{equation}
Thus $\gamma_B$ should be thought of as the mean $\gamma$ conditioned on the set $B$.

\begin{Remark}
We will use the notation $\gamma_B$ only in the case when $\gamma$ is a shift-invariant mean and $B$ is a polynomial Bohr$_0$ set, and we claim that under these assumptions $\gamma_B$ is always well-defined.

Indeed, if $B$ is a polynomial Bohr$_0$ set, then $\1_B$ is \emph{almost convergent}: every shift-invariant mean assigns to $\1_B$ the same value, namely the natural density $d(B)$ of $B$.
By \cite{Lorentz48}, this is equivalent to the uniform Cesàro convergence
\begin{equation*}
\lim_{N-M\to\infty}\frac{1}{N-M}\sum_{M\leq n<N}\1_B(n)=d(B).
\end{equation*}
For polynomial Bohr$_0$ sets, this property follows from a classical argument of Furstenberg: one realizes the sequence $\1_B(n)$ as the values of a Riemann-integrable function along the orbit of a point in a uniquely ergodic system; see, for instance, \cite[Section 4.4]{EW11}.
We leave the details to the interested reader.
It follows that $\gamma(\1_B)=d(B)>0$ for every shift-invariant mean $\gamma$, and hence $\gamma_B$ is well defined.
\end{Remark}

We now define the density used in our main theorem.

\begin{Definition}[The density $d^{\plustimes}$]
\label{def_new_density}
Fix $s\in\N$.
Let $\beta$ and $\gamma$ be shift-invariant means on $\ell^\infty(\N)$, and assume that $\beta$ is ergodic.
For $E\subseteq\N$, define
\begin{equation}\label{eq:definition-density}
d^{\plustimes}(E):=\limsup_{B\in \mathrm{PB}_0}\gamma_B\big(m\mapsto \beta(n\mapsto \1_E(m^s n))\big).
\end{equation}
\end{Definition}

Note that $d^{\plustimes}$ satisfies the axioms of a subadditive density:
\begin{enumerate}[label=(\roman{enumi}),ref=(\roman{enumi}),leftmargin=*]
\item $d^{\plustimes}(\emptyset)=0$ and $d^{\plustimes}(\N)=1$;
\item if $A\subseteq B$, then $d^{\plustimes}(A)\leq d^{\plustimes}(B)$;
\item $d^{\plustimes}(A\cup B)\leq d^{\plustimes}(A)+d^{\plustimes}(B)$ for all $A,B\subseteq\N$.
\end{enumerate}
\medskip

The definition of our density in \eqref{eq:definition-density} might seem complicated at first reading, but one can think of it as an invariant-mean version of the double average
\begin{equation}\label{eq:double-average-heuristic}
\limsup_{B\in \mathrm{PB}_0}
\lim_{M\to\infty}\lim_{N\to\infty}
\E_{m\in B\cap[M]}\E_{n\in[N]}\1_E(m^s n).
\end{equation}
We use \eqref{eq:definition-density} instead of \eqref{eq:double-average-heuristic} for two reasons.
First, the limits in \eqref{eq:double-average-heuristic} need not exist for an arbitrary set $E$, whereas invariant means provide a stable way to take such averages without worrying about the existence of limits.
Second, the inner average $\E_{n\in[N]}$ need not have the recurrence properties required for our argument; choosing $\beta$ to be ergodic ensures that the inner average has the necessary ergodic recurrence behavior needed in our proof of \cref{Thm_maindensity}.

This definition should be compared with the density used in \cite{Richter26}, where no ergodicity assumption on the inner average was needed, and where the outer dilation was taken over the simpler family $\{a\N:a\in\N\}$ rather than over the family $\mathrm{PB}_0$.

\subsection*{Structure of the paper}

The rest of the paper is organized as follows.

In Section~2, we review the necessary background from ergodic theory, including basic notions concerning measure-preserving systems and results on ergodic averages along polynomial iterates.
We also collect the required facts about invariant means and \Folner{} sequences on $\N$.

In Section~3, we state our main dynamical result (Theorem~\ref{prop_dynamical statement}) and use a variant of Furstenberg's correspondence principle to deduce the main combinatorial result (Theorem~\ref{Thm_maindensity}) from it.

Finally, Section~4 is devoted to the proof of the dynamical theorem.
The proof utilizes a standard ergodic-theoretic splitting into structured and pseudorandom components and combines recurrence arguments with results from Ramsey theory.

\subsection*{Notation}
We denote by $\N, \Z,\Q,\R,\C$ the sets of natural, integer, rational, real and complex numbers, respectively. We also define $\mathbb{S}^1=\{z\in \C\colon |z|=1\}$ and the torus $\T=\R/\Z$. Finally, we also let $e(t):=e^{2\pi it}$ for $t\in \R$.

We set $[N]=\{1,2,\ldots, N\}$.
We denote the average of a sequence $f(n)$ along a finite, non-empty set $F\subset \N$ by \begin{equation*}
    \E_{n\in F}f(n)=\frac{1}{|F|}\sum_{n\in F}f(n).
\end{equation*}

We will use standard asymptotic notation throughout this paper. If $f,g$ are sequences, we write $f=\Oh(g)$, if there exists a constant $C$ such that $|f(n)|\leq C|g(n)|$.

We will use lowercase letters like $f,g:\N\to \C$ for sequences in $\ell^{\infty}(\N)$, and we use capital letters $F, G$ to denote functions in metric spaces or probability spaces. We reserve lowercase Greek letters like $\beta,\gamma$ for linear functionals. 

If $E$ is a subset of $\N$, we let $|E|$ be its cardinality and for a fixed $h\in \N$, we also set $E-h=\{n\in \N\colon n+h\in E\}$ and $E/h=\{n\in \N, hn\in E\}$.

\subsection*{Acknowledgments}
This work was supported by the Swiss National Science Foundation grant TMSGI2-211214. We thank Joel Moreira for helpful discussions during the preparation of this paper.

\subsection*{AI disclosure statement}
The authors used the AI reasoning model \emph{GPT-5.5 Thinking}
to generate an initial draft of the proof of
Lemma~\ref{lem_ergodicity passes to factors}, a fairly standard
result asserting that the restriction of an ergodic mean to an
invariant subalgebra remains ergodic. The proof was then carefully checked and revised.
All other ideas and proofs were generated by the authors, and AI tools were only 
used to improve the wording and readability of the manuscript. The authors assume responsibility for all content.

\section{Preliminaries}\label{Section_background}

In this section, we collect some background information from ergodic theory and functional analysis that will be relevant in the proof of Theorem \ref{Thm_maindensity}.

\subsection{Basics of measure-preserving systems}

A measurable map $T$ on a probability space $(X,\mathcal{X},\mu)$ is called \emph{measure-preserving}, if $\mu(T^{-1}A)=\mu(A)$ for all $A\in \mathcal{X}$. We call $(X,\mathcal{X},\mu,T)$ a \emph{measure-preserving system}.
A measure-preserving system is \emph{ergodic} if every invariant set $A\in \mathcal{X}$ (that is, any set $A\in \mathcal{X}$ satisfying $T^{-1}A=A$) has measure $0$ or $1$.

We say that a function $F\in L^2(\mu)$ with $F\neq 0$ is an \emph{eigenfunction} of $(X,\mathcal{X},\mu,T)$, if there exists $\lambda\in \C$, such that $F(Tx)=\lambda F(x)$ for $\mu$-almost every $x$. In this context, $\lambda$ is called the \emph{eigenvalue} associated to the eigenfunction $F$. It is easy to check that the set of all possible eigenvalues of a measure-preserving system is a subgroup of the unit circle.

We say that a measure-preserving system $(Y,\mathcal{Y},\nu,S)$ is a \emph{factor} of another measure-preserving system $(X,\mathcal{X},\mu,T)$, if there exist $X'\subset X$, $Y'\subset Y$ of full measure that are invariant under $T$ and $S$ respectively and a map $p:X'\to Y'$ such that $\nu=\mu\circ p^{-1}$ and $p\circ T(x)=S\circ p(x)$ for all $x\in X'$. The map $p$ is called a \emph{factor map}.
We say that $(X,\mathcal{X},\mu,T)$ and $(Y,\mathcal{Y},\nu,S)$ are \emph{isomorphic}, if there exist factor maps in both directions.

A factor of $(X,\mathcal{X},\mu,T)$ can always be realized as a $T$-invariant sub-$\sigma$-algebra of $\mathcal{X}$. To be more precise, there is a sub-$\sigma$-algebra $\mathcal{Y}$ of $\mathcal{X}$ with $T^{-1}\mathcal{Y}=\mathcal{Y}$ (modulo null sets) and such that the factor is isomorphic to the measure-preserving system $(X,\mathcal{Y},\wt{\mu},T)$, where $\wt{\mu}=\mu|_{\mathcal{Y}}$ and $T$ is regarded as a measurable map on $(X,\mathcal{Y},\wt{\mu})$.

The sub-$\sigma$-algebra generated by rational eigenfunctions (meaning functions that satisfy $T^qF=F$ for some $q\in \N$)
is called the \emph{rational Kronecker factor} of $(X,\mathcal{X},\mu,T)$. We will denote this factor by $\mathcal{K}_{\mathrm{rat},T}$.
Similarly, the sub-$\sigma$-algebra spanned by the eigenfunctions in an ergodic system is called the \emph{Kronecker factor}. It is the largest factor of $(X,\mathcal{X},\mu, T)$ that is isomorphic to a rotation on a compact abelian group. We will denote this factor by $\mathcal{K}_{T}$.

A function $F$ on $L^2(\mu)$ is called \emph{weak-mixing}, if \begin{equation*}
   \lim_{k\to+\infty} \E_{m\in \Phi_k} \Big|\int \overline{F}\cdot T^mF\, d\mu \Big|=0
\end{equation*}for all \Folner\ sequences $(\Phi_k)_{k\in \N}$ (see Definition \ref{D: definition of Folner} below). 

The following theorem asserts that in any measure-preserving system, we can split any function into a component that is measurable with respect to the Kronecker factor and a component that is weak-mixing.

\begin{Theorem}[{cf.\ \cite[Theorem~2.3.4]{Krengel85} or~\cite[Chapter 4,~Proposition~19]{HK18}}]\label{Thm_compact+weak mixing}
    Let $(X,\mathcal{X},\mu,T)$ be a measure-preserving system and define \begin{equation*}
        \mathcal{H}_\text{c}=\overline{\operatorname{span}\{F\in L^2(\mu)\colon F \ \text{is an eigenfunction}\} }
    \end{equation*}and 
    \begin{equation*}
        \mathcal{H}_{\text{wm}}=\{F\in L^2(\mu)\colon F \ \text{is a weak-mixing function}\}.
    \end{equation*}Then, $L^2(\mu)=\mathcal{H}_\text{c}\oplus \mathcal{H}_{wm}$.
\end{Theorem}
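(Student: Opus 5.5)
The strategy is to work with the Koopman operator $U_T F = F\circ T$ on $L^2(\mu)$, which is an isometry. Since $T$ need not be invertible, we pass to the natural extension, or simply use the spectral theorem for contractions via its unitary dilation, so that we may assume $U=U_T$ is unitary. In the unitary case the spectral theorem attaches to each $F\in L^2(\mu)$ a finite positive Borel measure $\nu_F$ on $\mathbb{S}^1$ with
\begin{equation*}
\langle U^m F,F\rangle=\int_{\mathbb{S}^1} z^m\, d\nu_F(z)\qquad\text{for all } m.
\end{equation*}
We then split $\nu_F$ into its atomic part and its continuous part. This induces the decomposition $L^2(\mu)=\mathcal{H}_{pp}\oplus\mathcal{H}_{cont}$ into the spectral subspaces on which the spectral measures are purely atomic, respectively purely continuous. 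Both subspaces are closed, $U$-invariant and mutually orthogonal; this is standard spectral theory, obtained from the projection-valued measure by applying it to the set of atoms.

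\textbf{Step 1: $\mathcal{H}_{pp}=\mathcal{H}_c$.} If $\nu_F$ is purely atomic, write $F=\sum_\lambda E(\{\lambda\})F$, where $E$ is the projection-valued measure. Each nonzero component $E(\{\lambda\})F$ satisfies $UG=\lambda G$, so it is an eigenfunction. Hence $F\in\mathcal{H}_c$. Conversely, an eigenfunction has spectral measure a point mass.

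\textbf{Step 2: $\mathcal{H}_{cont}=\mathcal{H}_{wm}$.} By Wiener's lemma in its \Folner-uniform form,
\begin{equation*}
\E_{m\in\Phi_k}\big|\langle U^mF,F\rangle\big|^2=\iint \E_{m\in\Phi_k}(z\bar w)^m\, d\nu_F(z)\,d\nu_F(w).
\end{equation*}
The inner average is bounded by $1$. Because $\Phi_k$ is \Folner, it tends to $0$ unless $z=w$, in which case it equals $1$. Dominated convergence then shows that the limit equals $\sum_\lambda\nu_F(\{\lambda\})^2$. This vanishes exactly when $\nu_F$ is continuous. Finally, the Cauchy--Schwarz inequality shows that the vanishing of the averages of $|\cdot|^2$ is equivalent to the vanishing of the averages of $|\cdot|$, which is the definition of a weak-mixing function.

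The main obstacle is the non-invertible case. There, $\langle U^mF,F\rangle$ for negative $m$ is defined as the conjugate of the value at $-m$, and the \Folner\ sequences are taken in $\N$. I would handle this by noting that $m\mapsto\langle U^mF,F\rangle$ is positive definite on $\Z$. Herglotz's theorem then supplies $\nu_F$ directly, without any dilation. The eigenfunction identification for an isometry still holds, because $\|UG-\lambda G\|^2=2\|G\|^2-2\,\mathrm{Re}\,\bar\lambda\langle UG,G\rangle$, and atoms of $\nu_F$ yield eigenvectors via the Wold decomposition, whose unitary part contains them. Alternatively one simply cites \cite{Krengel85} for this step.
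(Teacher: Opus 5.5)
Your proposal is correct. The paper gives no proof of this statement; it simply cites Krengel and Host--Kra. So your argument is a self-contained replacement rather than a reconstruction of anything in the text. You use the standard spectral proof. First, $\mathcal{H}_c$ is identified with the vectors whose spectral measure $\nu_F$ is purely atomic. Second, the \Folner-uniform Wiener lemma $\lim_k\E_{m\in\Phi_k}|\langle U^mF,F\rangle|^2=\sum_\lambda\nu_F(\{\lambda\})^2$ identifies $\mathcal{H}_{wm}$ with $\mathcal{H}_c^{\perp}$. The citation buys brevity. Your route buys a check of exactly the version the paper relies on. There, weak mixing is defined through \emph{all} \Folner\ sequences in $\N$, and your lemma handles these uniformly, since it only uses $|\Phi_k\triangle(\Phi_k+1)|=o(|\Phi_k|)$. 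Also, $T$ is not assumed invertible. This matters here, because the map $T\phi=\phi\circ\sigma$ produced by the correspondence principle need not be invertible, and you correctly flag this case.

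Two points in your treatment of the non-invertible case should be written out.

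\textbf{Descent from a unitary extension.} If you pass to a unitary $\tilde U$ on a larger space extending $U$ (natural extension or minimal unitary extension), "we may assume $U$ unitary" is not free: you must bring eigenvectors back down. Let $P$ be the projection onto $L^2(\mu)$. Taking adjoints in $\tilde UP=P\tilde UP$ gives $P\tilde U^{*}=P\tilde U^{*}P$. Hence if $\tilde Ug=\lambda g$, the vector $h=Pg$ satisfies $P\tilde U^{*}h=\bar\lambda h$, so $\langle Uh,h\rangle=\lambda\|h\|^2$. Your identity $\|Uh-\lambda h\|^2=2\|h\|^2-2\,\mathrm{Re}\,\bar\lambda\langle Uh,h\rangle$ then forces $Uh=\lambda h$. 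Consequently, $F\in L^2(\mu)$ is orthogonal to all eigenvectors of $U$ if and only if it is orthogonal to all eigenvectors of $\tilde U$.

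\textbf{The Wold route.} State explicitly that the unilateral-shift part contributes no atoms. For $F_s$ in that part one has $\langle U^mF_s,F_s\rangle\to 0$, and in fact $\nu_{F_s}$ is absolutely continuous. This is what justifies locating all atoms of $\nu_F$ in the unitary part, where the spectral projections $E(\{\lambda\})$ are available.
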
This implies the following corollary.

\begin{Corollary}\label{cor_decomposition via Kronecker}
    Let $(X,\mathcal{X},\mu,T)$ be a measure-preserving system. Then, any function in $L^2(\mu)$ admits a decomposition \begin{equation*}
        F=F_{\mathrm{str}}+F_{\mathrm{unf}},
    \end{equation*}where $F_{\mathrm{str}}\in \mathcal{H}_\mathrm{c}$ and $F_{\mathrm{unf}}\in \mathcal{H}_{\mathrm{wm}}$. Furthermore, if $F$ takes values in an interval $[a,b]$, we can choose $F_{\mathrm{str}}$ to take values in $[a,b]$ as well.
\end{Corollary}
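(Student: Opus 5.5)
The decomposition itself comes directly from \cref{Thm_compact+weak mixing}: since $L^2(\mu)=\mathcal{H}_{\mathrm{c}}\oplus\mathcal{H}_{\mathrm{wm}}$, every $F\in L^2(\mu)$ can be written as $F=F_{\mathrm{str}}+F_{\mathrm{unf}}$ with $F_{\mathrm{str}}\in\mathcal{H}_{\mathrm{c}}$ and $F_{\mathrm{unf}}\in\mathcal{H}_{\mathrm{wm}}$. I would take $F_{\mathrm{str}}$ to be the orthogonal projection of $F$ onto the closed subspace $\mathcal{H}_{\mathrm{c}}$, and set $F_{\mathrm{unf}}=F-F_{\mathrm{str}}$. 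The only real content is the claim that $F_{\mathrm{str}}$ inherits the range $[a,b]$ from $F$.

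To prove this, I would identify the projection onto $\mathcal{H}_{\mathrm{c}}$ with a conditional expectation. Let $\mathcal{K}$ be the sub-$\sigma$-algebra generated by all eigenfunctions; this is the Kronecker factor $\mathcal{K}_T$ in the ergodic case, and the same definition works in general. The goal is to show $\mathcal{H}_{\mathrm{c}}=L^2(X,\mathcal{K},\mu)$.

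The inclusion $\subseteq$ is immediate, since eigenfunctions are $\mathcal{K}$-measurable.

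For $\supseteq$, I would argue in three steps:
\begin{enumerate}[label=(\alph*)]
\item If $G$ is an eigenfunction, then so are $\overline{G}$ and $|G|$. Indeed $|G|$ is $T$-invariant, because $|\lambda|=1$ for $T$ measure-preserving.
\item The product of two eigenfunctions is an eigenfunction. For bounded eigenfunctions this is clear. For general ones, truncate: replace $G$ by $G\,1_{\{|G|\le M\}}$, which is again an eigenfunction because $|G|$ is invariant, and these truncations converge to $G$ in $L^2$. Hence $\mathcal{H}_{\mathrm{c}}$ contains a conjugation-closed algebra $\mathcal{A}$ of bounded functions that generates $\mathcal{K}$.
\item By a standard monotone class / $L^2$-density argument (functions of the form $\phi(G_1,\dots,G_k)$ with $\phi$ continuous are approximable by polynomials in $G_i,\overline{G_i}$ on bounded sets, via Stone--Weierstrass), the algebra $\mathcal{A}$ is $L^2$-dense in $L^2(\mathcal{K})$.
\end{enumerate}
Therefore $\mathcal{H}_{\mathrm{c}}=L^2(\mathcal{K})$, so $F_{\mathrm{str}}=\E[F\mid\mathcal{K}]$.

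Conditional expectation is positive and preserves constants. So if $a\le F\le b$ almost everywhere, then $a\le \E[F\mid\mathcal{K}]\le b$ almost everywhere, and we may modify $F_{\mathrm{str}}$ on a null set so that it takes values in $[a,b]$ everywhere.

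The main obstacle is step (c), the density argument identifying $\mathcal{H}_{\mathrm{c}}$ with $L^2(\mathcal{K})$. The delicate point is handling unbounded eigenfunctions and non-ergodic systems. The truncation using invariance of $|G|$ is what makes step (b) go through, and after that step (c) is routine measure theory. In the ergodic case everything simplifies, since eigenfunctions have constant modulus.
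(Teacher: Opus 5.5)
Your proposal is correct and follows the paper's route. The paper simply states that the corollary follows from \cref{Thm_compact+weak mixing}; its later uses show that $F_{\mathrm{str}}$ is meant to be the orthogonal projection onto $\mathcal{H}_{\mathrm{c}}$, i.e.\ the conditional expectation onto the factor generated by eigenfunctions, whose positivity gives the range claim. Your identification of $\mathcal{H}_{\mathrm{c}}$ with $L^2(\mathcal{K})$, including the truncation trick for unbounded eigenfunctions in non-ergodic systems, fills in the details the paper leaves unstated.
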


\subsection{Characteristic factors for some polynomial families}

Next, we prove that the Kronecker factor is characteristic for ergodic averages involving two polynomial iterates and a polynomial phase twist.
This is a key step in our analysis of sum-product patterns, since it lets us apply the compact/weak mixing decomposition from \cref{Thm_compact+weak mixing}.
Our proof relies crucially on the work of Frantzikinakis \cite{Frantzikinakis08}, who characterized the optimal characteristic factor for averages involving three polynomial iterates, as well as special cases of results from \cite{FK06} and \cite{FrantzikinakisKuca25}.

We call a finite collection of polynomials \emph{essentially distinct} if the difference of any two distinct polynomials in this collection yields a non-constant polynomial.

\begin{Proposition}\label{prop_optimal factor for 2 polynomials}Let $P_1,P_2,Q$ be essentially distinct polynomials with integer coefficients and zero constant terms. Then, for any $\theta\in \T$,
 any ergodic measure-preserving system $(X,\mu,T)$, and any functions $F,G\in L^{\infty}(\mu)$ such that at least one of $F,G$ is weak-mixing, we have \begin{equation*}
     \sup_{\theta\in \T}  \lim_{M\to\infty}\Big\|  \E_{m\in \Phi_M} e( Q(m)\theta) T^{P_1(m)}F\cdot T^{P_2(m)}G \Big\|_{L^2(\mu)}=0
    \end{equation*}
for any \Folner\ sequence $(\Phi_M)_{M\in \N}$.
\end{Proposition}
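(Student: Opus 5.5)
We will prove the statement by absorbing the phase $e(Q(m)\theta)$ into a product system, which reduces the claim to a three-iterate polynomial average, and then use Frantzikinakis' description of the characteristic factor for such averages.

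Fix $\theta\in\T$. On $X\times\T$ consider the transformation $S(x,t)=(Tx,t+\theta)$ with the measure $\mu\times m_\T$, where $m_\T$ denotes Haar measure on $\T$. Put $H(t)=e(t)$. Since $Q$ has zero constant term,
\begin{equation*}
e(Q(m)\theta)\,T^{P_1(m)}F\cdot T^{P_2(m)}G
\end{equation*}
equals the average of
\begin{equation*}
S^{Q(m)}(1\otimes H)\cdot S^{P_1(m)}(F\otimes 1)\cdot S^{P_2(m)}(G\otimes 1)
\end{equation*}
multiplied by $\overline{H(t)}$. Taking $L^2$ norms on $X\times\T$, the norm in question is therefore bounded by
\begin{equation*}
\Big\|\E_{m\in\Phi_M} S^{Q(m)}H'\cdot S^{P_1(m)}F'\cdot S^{P_2(m)}G'\Big\|_{L^2(\mu\times m_\T)},
\end{equation*}
where $H'=1\otimes H$, $F'=F\otimes 1$ and $G'=G\otimes 1$. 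The system $S$ need not be ergodic, but it decomposes into ergodic components, each of the form $T\times R_\theta$ restricted to a suitable orbit closure.

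A cleaner route avoids the product and works with the $\mathbb{S}^1$-extension directly. The standard approach is to apply the van der Corput lemma to the original average. Each differencing step produces averages of the form
\begin{equation*}
e(\Delta_h Q(m)\theta)\,T^{P_1(m+h)-P_1(m)}(\ldots)\cdots .
\end{equation*}
The PET induction of Bergelson, as used in \cite{FK06}, eventually removes the phase, because polynomial phases are killed by differencing. This yields control by a Host--Kra seminorm $\|F\|_{U^k}$ or $\|G\|_{U^k}$, uniformly in $\theta$. Uniformity holds because the van der Corput bounds do not depend on the phase coefficient; this is exactly the statement that the averages are controlled by seminorms ``uniformly over polynomial phases'' from \cite{FrantzikinakisKuca25}. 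Hence it suffices to assume that $F$ and $G$ are measurable with respect to the $k$-step nilfactor $\mathcal{Z}_k$.

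On the nilfactor the task is to upgrade control by $\mathcal{Z}_k$ to control by the Kronecker factor. Here I would use \cite{Frantzikinakis08}. For three essentially distinct polynomials $Q,P_1,P_2$ with zero constant terms, the averages of
\begin{equation*}
S^{Q(m)}H'\cdot S^{P_1(m)}F'\cdot S^{P_2(m)}G'
\end{equation*}
have as characteristic factor the Kronecker factor, possibly augmented by an Abramov or $2$-step component. The $2$-step piece is needed only when the polynomials are linearly dependent in a specific quadratic way, for example $\{n, 2n, 3n\}$-type or $\{an^2, bn^2+cn\}$-type relations. Whatever the exact form, a relevant function must project nontrivially onto $\mathcal{K}_S$. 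Eigenfunctions of $S$ of the form $F'$ are eigenfunctions of $T$ on $X$, so if $F$ is weak-mixing for $T$, then $F'$ is orthogonal to $\mathcal{K}_S$, and the limit vanishes for every $\theta$. Passing to ergodic components of $S$ is harmless, since those components are translates of one another.

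The step I expect to be the main obstacle is the exceptional case in Frantzikinakis' theorem, where the optimal characteristic factor is strictly larger than the Kronecker factor. One must then show that the extra $2$-step nilsequence structure cannot correlate with the linear phase $H'$ together with a weak-mixing $F$. I would first handle this by an explicit computation on $2$-step nilsystems, using the fact that for weak-mixing $F$ the projection onto the Abramov factor has no Kronecker part. Alternatively, I would show that the degree of the differenced polynomials forces control by $\mathcal{K}$ via a single Cauchy--Schwarz with the phase removed. Finally, the supremum over $\theta$ is handled by the uniform seminorm bound together with the argument above applied to each fixed $\theta$.
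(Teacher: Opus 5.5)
\textbf{The gap.} The failing step is the sentence ``Whatever the exact form, a relevant function must project nontrivially onto $\mathcal{K}_S$.'' When $\{Q,P_1,P_2\}$ is linearly dependent, the characteristic factor of the three-iterate average can be strictly larger than $\mathcal{K}_S$. A function orthogonal to $\mathcal{K}_S$ can then carry the entire average. The two fixes you sketch for this ``main obstacle'' are too vague to check. In one sub-case no fix is possible, because the proposition itself fails there.

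\textbf{Counterexample.} Take $P_1(m)=m$, $P_2(m)=2m$, $Q(m)=m^2$; these are essentially distinct with zero constant terms. Let $T(x,y)=(x+\alpha,y+x)$ be the ergodic skew product on $\T^2$ with $\alpha$ irrational, and let $F(x,y)=e(2y)$, $G(x,y)=e(-y)$. Since $T^m(x,y)=(x+m\alpha,\,y+mx+\binom{m}{2}\alpha)$, we get $\int\overline{F}\cdot T^mF\,d\mu=e((m^2-m)\alpha)\int e(2mx)\,dx=0$ for $m\neq 0$. So $F$ is weak-mixing, and likewise $G$. Yet for every $m$,
\[
e(m^2\alpha)\,T^{m}F\cdot T^{2m}G=e\big(m^2\alpha+2y+2mx+(m^2-m)\alpha-y-2mx-(2m^2-m)\alpha\big)=e(y).
\]
With $\theta=\alpha$ the average therefore has $L^2$-norm $1$ along every \Folner\ sequence. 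In your product system this reads $S^{Q(m)}H'\cdot S^{P_1(m)}F'\cdot S^{P_2(m)}G'\equiv e(t+y)$, even though $F'\perp\mathcal{K}_S$. The obstruction is the order-$2$ quasi-eigenfunction $e(y)$ (note $Te(y)=e(x)e(y)$), which is exactly the extra structure you hoped to rule out.

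\textbf{Comparison with the paper.} The paper never treats the phase as a third iterate; it splits into three cases.
\begin{enumerate}
\item If $\{P_1,P_2,Q\}$ is linearly independent, it argues as you do, via \cite{FK06} on $T\times R_\theta$, where $\mathcal{K}_{\mathrm{rat}}$ is characteristic. This is the only case your argument covers as written.
\item If $Q=r_1P_1+r_2P_2$ with $P_1,P_2$ independent, it absorbs $e(Q(m)\theta)$ into the two functions using the commuting maps $T\times R_{r_1\theta}$ and $T\times R_{r_2\theta}$, and applies \cite{FrantzikinakisKuca25}.
\item If $P_1=k_2P_0$, $P_2=k_1P_0$, $Q=cP_0$, it places $F\otimes e$ and $G\otimes\overline{e}$ on an ergodic joining with the rotation by $\wt\theta$. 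What remains is the two-iterate average $\{k_2P_0,k_1P_0\}$, for which the Kronecker factor is characteristic by \cite{Frantzikinakis08}.
\end{enumerate}
This phase-splitting, which keeps the number of iterates at two, is the idea your proposal lacks in cases 2 and 3. An example is $\{3m,m,2m\}$, where the three-iterate characteristic factor is $\mathcal{Z}_2$ but the statement is true.

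\textbf{The paper's case split is also incomplete.} It assumes that a dependence among $P_1,P_2,Q$ forces $Q\in\operatorname{span}_\Q\{P_1,P_2\}$. This fails when the dependence involves only $P_1,P_2$. The uncovered case, $P_1,P_2$ proportional with $Q$ outside their span, is exactly the one in the example above. So neither your argument nor the paper's proves the proposition as stated. It needs an extra hypothesis, such as $Q\in\operatorname{span}_\Q\{P_1,P_2\}$ whenever $P_1,P_2$ are dependent. This is also the configuration invoked in Section~4 when $P(y)=y$: there $P_a,P_b$ are proportional linear polynomials, while $Q$ comes from an arbitrary polynomial Bohr$_0$ set.
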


\begin{proof}
Without loss of generality, we assume throughout the proof that $F$ is a weak-mixing function. We abuse terminology and say a function $F\in L^{\infty}(\mu)$ is orthogonal to a factor $(Y, \mathcal{Y}, \nu,S)$, if the conditional expectation\footnote{Recalling that a factor corresponds to an invariant sub-$\sigma$-algebra, the conditional expectation with respect to a factor is defined to be equal to the conditional expectation with respect to this $\sigma$-algebra.} $\E_{\mu}(F|\mathcal{Y})$ is equal to zero. 

    First, we handle the case where the polynomials $P_1,P_2,Q$ are rationally independent. This means that for every $k,\ell,r\in \Z$, the polynomial $kP_1+\ell P_2+rQ$ is non-constant. Let $S$ denote the rotation by $\theta$ on the torus $\T$ and let $\lambda$ be the Lebesgue measure on $\T$.
    
We observe that our averages can be rewritten as \begin{equation}\label{eq_independent polynomial averages}
    \E_{m\in \Phi_M}\ \wt{T}^{Q(m)}H \cdot\wt{T}^{P_1(m)}\wt{F}\cdot \wt{T}^{P_2(m)}\wt{G}
\end{equation}where $\wt{T}=T\times S$ is a measure-preserving map on $X\times \T$ for the product measure $\mu\times \lambda$. Here, for every point $(x,y)\in X\times \T$, we have the relations $H(x,y)=e(y)$, $\wt{F}(x,y)=F(x)$ and $\wt{G}(x,y)=G(x)$.
Using the independence assumption and \cite[Theorem 1.1]{FK06}, we conclude that the average \eqref{eq_independent polynomial averages} converges to zero in $L^2(\mu)$ if one of the functions is orthogonal to the rational Kronecker factor. We claim that $\wt{F}$ is indeed orthogonal to the Kronecker factor of $(X\times \T,\mu\times \lambda, T\times S)$, from which our result immediately follows, as the Kronecker factor contains the rational Kronecker factor.

Using \cref{Thm_compact+weak mixing}, it suffices to show that $\wt{F}$ is a weak-mixing function for the product system.
Indeed, as $\wt{F}$ depends only on the first coordinate, we have\begin{equation*}
    \Big|\int\overline{\wt{F}}\cdot \wt{T}^{qm}\wt{F}\,d(\mu\times \lambda)\Big| =\Big|\int\overline{F}\cdot T^{qm}F\,d\mu\Big|.
\end{equation*}Averaging over $m$ and using the assumption that $F$ is weak-mixing, we get the desired conclusion.

    We now assume that $P_1, P_2, Q$ are dependent. Thus, there exist $r_1,r_2\in \Q$, such that $$Q(m)=r_1P_1(m)+r_2P_2(m).$$ 

 Suppose that the polynomials $P_1, P_2$ are pairwise independent. Let $S_1$ denote the rotation on $\T$ by $r_1\theta$ and $S_2$ denote the rotation by $r_2\theta$. 
Once more, we can rewrite our ergodic average as \begin{equation*}
    \E_{m\in \Phi_M} T_1^{p_1(m)}\wt{F}\cdot T_2^{p_2(m)}\wt{G}, 
    \end{equation*}where $T_1=T\times S_1, T_2=T\times S_2$ preserve the measure $\mu\times \lambda$ on $X\times \T$. Here, $\wt{F}(x,y)=F(x)e(y)$ and $\wt{G}(x,y)=G(x)e(y)$. Observe that $T_1, T_2$ are commuting transformations.

    Our independence assumption, combined with \cite[Theorem 2.10]{FrantzikinakisKuca25}, implies that the rational Kronecker factor is characteristic for the average \begin{equation*}
        \E_{m\in \Phi_M} T_1^{p_1(m)}\wt{F}\cdot T_2^{p_2(m)}\wt{G},
    \end{equation*}in the sense that if $\wt{F}\perp \mathcal{K}_{\mathrm{rat},T_1}$ or $\wt{G}\perp \mathcal{K}_{\mathrm{rat},T_2}$, the previous average converges to zero in norm.
   But $\wt{F}$ is orthogonal to $\mathcal{K}_{\mathrm{rat},T_1}$, since $F$ is weak-mixing. Indeed, arguing as above, we can show that $\wt{F}$ is weak-mixing for the system $(X\times \T, \mu\times \lambda, T_1)$.

    We are left with the final case where the polynomials $P_1,P_2$ are dependent. Here, we will show that the Kronecker factor is characteristic for the initial average (unlike the previous cases, where the average was controlled by the rational Kronecker factor). The dependency assumption implies that there exist a polynomial $P\in \P[x]$ and coprime integers $k_1,k_2$, such that $P(m)=k_1P_1(m)=k_2P_2(m)$. It is not difficult to show that the polynomial $P_0(m)=P(m)/k_1k_2$ has integer coefficients.
    
 Define
\[
\wt{\theta}=\frac{r_1k_2+r_2k_1}{k_2-k_1} \theta.
\]
We prove our result when $\theta$ is irrational. In the rational case, the argument is similar, but we replace 
the rotation on the torus on the second coordinate below with an ergodic rotation on finitely many points.

Consider the space $X\times\T$ with the transformation $T\times S,$ where $S$ is the rotation by $\wt{\theta}$.
We also consider an ergodic joining $\nu$ of the measure $\mu$ and the Lebesgue measure $\lambda$ on $\T$, that is, a $T\times S$-invariant measure which maps to $\mu$ and $\lambda$ under the corresponding coordinate projections. 
Since both coordinates are ergodic systems, such a measure can easily be shown to exist  by ergodic decomposition (see \cite[Chapter 6, Lemma 6.8]{EW11}).

Once again, we define on the product system $\wt{F}(x,y)=F(x)e(y)$ and $\wt{G}(x,y)=G(x)e(-y)$. Using $P_1(m)=k_2P_0(m)$, $P_2(m)=k_1P_0(m)$, and $Q(m)=(r_1k_2+r_2k_1)P_0(m)$ we have
\[
\wt{F}\big(\wt{T}^{k_2 P_0(m)} (x,y)\big)= F(T^{P_1(m)}x) e(y+ k_2 P_0(m)\wt{\theta})
\]
and
\[
\wt{G}\big(\wt{T}^{k_1 P_0(m)} (x,y)\big)= G(T^{P_2(m)}x) e(-y-k_1 P_0(m)\wt{\theta}).
\]
We deduce that
\[
\wt{T}^{k_2 P_0(m)} \wt{F} \cdot \wt{T}^{k_1 P_0(m)} \wt{G}
=
e((k_2-k_1)P_0(m)\wt{\theta}) T^{P_1(m)}F\cdot T^{P_2(m)} G.
\]
Since $e((k_2-k_1)P_0(m)\wt{\theta})=e(Q(m)\theta)$, we obtain
\[
\E_{m\in \Phi_M} e\left(Q(m)\theta\right)\cdot T^{P_1(m)}F\cdot T^{P_2(m)}G
=
\E_{m\in \Phi_M} \wt{T}^{k_2 P_0(m)} \wt{F} \cdot \wt{T}^{k_1 P_0(m)} \wt{G}.
\]

We note that $$  \Big|\int\overline{\wt{F}}\cdot \wt{T}^{m}\wt{F}\,d\nu\Big| =\Big|\int e\big((m-1)\wt{\theta}\big)\overline{F(x)}\cdot F(T^{m}x)\,d\nu(x,y)\Big|=\Big|\int \overline{F}\cdot T^{m}F\,d\mu\Big|$$and, therefore, $\wt{F}$ is a weak-mixing function.
The claim now follows from Theorem A in \cite{Frantzikinakis08}, noting that the $\mathcal{Z}_1$ factor appearing in that theorem is the Kronecker factor, since our product system is ergodic \cite[Chapter 9, Proposition 8]{HK18}.

\end{proof}

\subsection{Means on $C^*$-algebras and \Folner\ sequences}

Here, we gather some basic lemmas about means on subalgebras of $\ell^{\infty}(\N)$. 

Recall the definitions of the shift and dilation operators $\sigma$ and $\tau_m$. 
A $C^{*}$-subalgebra $\A$ of $\ell^{\infty}(\N)$ is called \emph{shift-invariant} if for every element $f\in\A $ we have $\sigma f\in \A$. It is called \emph{affinely invariant}, if additionally $\tau_{m}f\in \A$ for every $m\in \N$. Given an at most countable collection of elements $(f_i)_{i\in \N}$ in $ \ell^{\infty}(\N)$, we can construct the smallest unital, affinely invariant  $C^{*}$-subalgebra of $\ell^\infty(\N)$ that contains all functions $f_i$. We will call this the \emph{affinely invariant $C^{*}$-algebra generated by} $(f_i)_{i\in \N}$. Note that any such algebra is separable and contains the sequence $\1_{\N}$, which will always be the unit.

We can define shift-invariant means on $C^*$-algebras using the same properties as we did for $\ell^{\infty}(\N)$. 
\begin{Definition}[Means on $C^*$-algebras]\label{def_means on algebras}
    A \emph{mean} on a unital $C^*$-subalgebra $\A$ of $\ell^{\infty}(\N)$ is a linear functional $\beta:\ell^\infty(\N)\to \C$, such that:
\begin{enumerate}
[label=(\roman{enumi}),ref=(\roman{enumi}),leftmargin=*]
\item $\beta$ is positive, i.e., $\beta(f)\geq 0$ whenever $f$ is a positive\footnote{An element in a $C^{*}$-algebra $\mathcal{A}$ is called positive if it takes the form $\sum_{i=1}^k b_i\cdot b_i^{*}$, for some $b_1,\ldots, b_k\in \mathcal{A}$.} element,

\item $\beta$ is normalized, i.e., $\beta(\1_\N)=1$.
\end{enumerate}

A mean on a shift-invariant $C^*$-subalgebra $\A$ is called \emph{shift-invariant} if $\beta(\sigma f)=\beta(f)$ for all $f\in\A$.
The collection of shift-invariant means is a convex and weak$^{*}$ compact set. We call the extreme points of this set the \emph{ergodic (shift-invariant) means} of $\A$. 
\end{Definition}

A simple way to create shift-invariant means on a shift-invariant $C^*$-subalgebra $\A$ is through \Folner\ sequences.

\begin{Definition}[\Folner\ sequences]\label{D: definition of Folner}
    A sequence $(\Phi_k)_{k\in \N}$ of subsets $\Phi_K\subseteq \N$ is called a \emph{\Folner\ sequence} if it is asymptotically shift invariant in the sense that \begin{equation*}
        \lim_{k\to+\infty} \frac{|\Phi_k\cap (\Phi_k-h)|}{|\Phi_k|}=1 \ \text{for all } h\in \N.
    \end{equation*}
\end{Definition}

An analogous definition of a \Folner\ net can be given by replacing the index set $\N$ with another directed set.

The most classical result of a \Folner\ sequence is a sequence of intervals with lengths tending to infinity.

\Folner\ sequences provide a natural way to construct means on subalgebras of $\ell^{\infty}(\N)$. Indeed, suppose that $\A$ is a unital, separable, shift-invariant $C^*$-subalgebra of $\ell^{\infty}(\N)$ and let ${\bf \Phi} = (\Phi_k)_{k\in \N}$ be a \Folner\ sequence. 
For any finite set $F$, we define the mean \begin{equation*}
    A_F(f):=\frac{1}{|F|}\sum_{n\in F} f(n).
\end{equation*}
Note that a sequence (or more generally a net) of the means $A_{\Phi_k}$ defines a sequence (net) of linear functionals on $\ell^{\infty}(\N)$ of norm 1. By the Banach-Alaoglu theorem, we can always find a subsequence (subnet) that converges in the weak$^*$ topology to some functional $A_{{\bf \Phi}}$. It is not difficult to show
that $A_{{\bf \Phi}}$ defines a shift-invariant mean on $\A$.
We say that a shift-invariant mean $\beta$ is a limit along a \Folner\ net, if there exists a \Folner\ net $(\Phi_i)_{i\in I}$, such that the functionals $A_{\Phi_i}$ converge to $\beta$ in the weak$^*$-topology. We say that the mean is a limit along a \Folner\ sequence, if we can take $I=\N$ in the previous definition.

The next lemma implies that all means on a separable $C^*$-subalgebra of $\ell^{\infty}(\N)$ take this form.

\begin{Lemma}\label{L: mean on separable is Folner}
    Let $\A$ be a unital, separable, shift-invariant $C^*$-subalgebra of $\ell^{\infty}(\N)$
    and let $\beta$ be a shift-invariant mean on $\A$. Then, there exists a \Folner\ sequence ${\bf \Phi} = (\Phi_k)_{k\in \N}$, such that $\beta(f)=A_{{\bf \Phi}}(f)$ for every $f\in \A$.
\end{Lemma}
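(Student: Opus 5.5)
The idea is to realize $\beta$ as a weak$^*$ limit of Cesàro-type averages over finite sets, using separability to reduce weak$^*$ convergence to convergence on a countable dense set. A diagonal argument then produces a single \Folner\ sequence.

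\emph{Step 1: finite averages are weak$^*$ dense.} Fix a countable dense subset $\{f_1,f_2,\ldots\}$ of $\A$. The goal of this step is: for every $k\in\N$ there is a finite set $\Phi_k\subseteq\N$ with
\[
|A_{\Phi_k}(f_i)-\beta(f_i)|<1/k\quad (i\leq k),\qquad \frac{|\Phi_k\cap(\Phi_k-h)|}{|\Phi_k|}>1-1/k\quad (h\leq k).
\]
I would argue via the Hahn--Banach separation theorem. Let $K_k$ be the weak$^*$ closure of the convex set of means $A_F$ with $F$ finite and $|F\cap(F-h)|/|F|>1-1/k$ for all $h\leq k$, viewed as functionals restricted to $\A$. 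Suppose $\beta\notin K_k$. Then there is a real-valued $f\in\A$ (take real parts, since $\A$ is closed under conjugation) and $c\in\R$ with $\beta(f)>c\geq \sup_{\phi\in K_k}\phi(f)$.

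Every mean on $\ell^\infty(\N)$ is in the weak$^*$ closure of the convex hull of point evaluations $\delta_n$. Since $\beta$ is shift-invariant, $\beta=\beta\circ\sigma^{j}$ for each $j$, and hence $\beta=\beta\circ\frac1L\sum_{j<L}\sigma^j$. For $L$ large, the functional $\delta_n\circ\frac1L\sum_{j<L}\sigma^j$ is exactly $A_{\{n,\ldots,n+L-1\}}$, and intervals of length $L\gg k$ are $1/k$-invariant under shifts $h\leq k$. So $\beta$ is a weak$^*$ limit of convex combinations of such interval averages $A_I$, each of which lies in $K_k$. This contradicts the separation.

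There is one subtlety: convex combinations $\sum c_jA_{I_j}$ need not themselves be of the form $A_F$. I would handle it by approximating the weights $c_j$ by rationals and taking disjoint translates of intervals with multiplicities, or more simply by choosing all intervals with a common length $L$ and replicating them with the right multiplicities via translates placed far apart. Alternatively, I would define $K_k$ from the start using weighted averages, which also suffices for the diagonal step, and then round at the end. This step is the main obstacle.

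\emph{Step 2: diagonalize.} With $\Phi_k$ from Step 1, the sequence $(\Phi_k)$ is \Folner, since for each fixed $h$ the invariance ratio exceeds $1-1/k$ once $k\geq h$. Also $A_{\Phi_k}(f_i)\to\beta(f_i)$ for every $i$. Because all the $A_{\Phi_k}$ and $\beta$ have norm $1$ and the $f_i$ are dense in $\A$, a $3\epsilon$ argument gives $A_{\Phi_k}(f)\to\beta(f)$ for all $f\in\A$. Thus the full sequence converges, no subsequence is needed, and $A_{\mathbf\Phi}=\beta$ on $\A$.
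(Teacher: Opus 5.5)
Your Step~2 is fine, and so is the reduction in Step~1 showing that $\beta$ lies in the weak$^*$-closed convex hull of the interval averages $A_{[n,n+L)}$ (via $\beta=\beta\circ\frac1L\sum_{j<L}\sigma^j$). But there is a genuine gap exactly at the point you flagged. The set of $A_F$ with $F$ almost invariant is not convex, so separation only shows that $\beta$ is approximated on $f_1,\ldots,f_k$ by convex combinations $\sum_j c_jA_{I_j}$, not by a single $A_{\Phi_k}$. Your proposed repair does not work. A set cannot contain $I_j$ with multiplicity. Replacing copies of $I_j$ by translates $I_j+t$ changes the functional, because for a general $f\in\A$ the average $A_{I_j+t}(f)$ is unrelated to $A_{I_j}(f)$. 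Shift-invariance of $\beta$ only lets you translate the whole configuration by a common $t$, which leaves the relative weights unchanged. ``Rounding at the end'' from weighted averages runs into the same non-convexity obstruction.

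The missing idea is a pigeonhole argument on the values of the test functions. Put $v(n)=(A_{[n,n+L)}(f_i))_{i\leq k}$ and cover the bounded set of possible values by finitely many cells of diameter $<\eta$.

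\emph{Pushing the weight to infinity.} Shift-invariance shows that $\beta$ lies in the weak$^*$-closed convex hull of $\{\delta_n|_\A : n\geq N\}$ for every $N$. Indeed, if a real $f\in\A$ separated them, then $\beta(f)=\beta(\sigma^Nf)\leq\sup_{n\geq N}f(n)$ would contradict the separation. Choose $N$ beyond every cell met by only finitely many $v(n)$. Then all the weight of the approximating combination sits on cells that are met infinitely often.

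\emph{Building one set.} For each such cell, pick a number of starting points $n$ with $v(n)$ in that cell, proportional (after rational rounding) to the cell's weight, with all chosen points pairwise at distance $\geq L$. Let $F$ be the union of the corresponding length-$L$ intervals. Then $|F\cap(F-h)|/|F|\geq 1-h/L$, and $A_F(f_i)$ is within $O(\eta)$ of $\beta(f_i)$.

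With this inserted, your diagonal argument gives a self-contained proof. The paper avoids the issue altogether. It cites \cite[Corollary~4.6]{HS9}, which says every shift-invariant mean on $\ell^\infty(\N)$ is a limit along a \Folner\ net. It then extends $\beta$ from $\A$ to a shift-invariant mean on $\ell^\infty(\N)$, and uses metrizability of the weak$^*$ topology for separable $\A$ to extract a sequence from the net.
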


\begin{proof}
Our result follows from \cite[Corollary 4.6]{HS9}, which asserts that every shift-invariant mean on $\ell^{\infty}(\N)$ is a limit along a \Folner\ net. This implies that any shift-invariant mean on $\A$ is also a limit along \Folner\ nets since it is the restriction of a shift-invariant mean of $\ell^{\infty}(\N)$ to $\A$ (by the invariant Hahn--Banach theorem).
Since $\A$ is a separable space, the subspace topology inherited from the weak*-topology on the dual of $\ell^{\infty}(\N)$ is metrizable. Thus, the dual of $\A$ is a compact metric space.
Using the first countability, it is not then difficult to show that for any cluster point of a \Folner\ net $(\Phi_i)_{i\in I}$, we can construct a ``subsequence'' $(\Phi_{i_n})_{n\in \N}$ with indices $i_n\in I$ satisfying $i_n\leq i_{n+1}$ for every $n\in \N$, which also converges to that point. 
We infer that any shift-invariant mean on $\A$ is also a limit along a \Folner\ sequence. 
\end{proof}

The following lemma implies that the ergodicity of a mean is preserved when passing to shift-invariant subalgebras.

\begin{Lemma}\label{lem_ergodicity passes to factors}
    Let $\beta$ be an ergodic shift-invariant mean on $\ell^{\infty}(\N)$. Let $\A$ be any unital, shift-invariant $C^*$-subalgebra of $\ell^{\infty}(\N)$. Then, the restriction $\beta|_{\A}$ is an ergodic mean on $\A$.
\end{Lemma}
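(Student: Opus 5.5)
We argue by contradiction, using the standard extension machinery. Write $\beta_0:=\beta|_{\A}$ and suppose $\beta_0$ is not an extreme point of the convex set $M_\sigma(\A)$ of shift-invariant means on $\A$. Then $\beta_0=t\beta_1+(1-t)\beta_2$ with $t\in(0,1)$ and $\beta_1\neq\beta_2$ in $M_\sigma(\A)$. We will produce a nontrivial convex decomposition of $\beta$ itself on $\ell^\infty(\N)$, contradicting ergodicity.

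\textbf{Step 1: domination.} Since $t\beta_1\leq\beta_0$ as positive functionals on $\A$, set $\rho:=t\beta_1$. Then $\rho$ is a positive, shift-invariant functional on $\A$ with $\rho\leq\beta|_{\A}$.

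\textbf{Step 2: extension.} We want a positive, shift-invariant functional $\tilde\rho$ on $\ell^\infty(\N)$ such that $\tilde\rho|_{\A}=\rho$ and $\tilde\rho\leq\beta$. Once we have it, $\beta=\tilde\rho+(\beta-\tilde\rho)$ expresses $\beta$ as a sum of two positive shift-invariant functionals of masses $t$ and $1-t$. After normalizing, this is a convex decomposition of $\beta$. It is nontrivial, because $\tilde\rho/t$ restricts to $\beta_1\neq\beta_0$ on $\A$, so $\tilde\rho/t\neq\beta$. That contradicts the extremality of $\beta$.

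\textbf{Step 3: constructing $\tilde\rho$.} This is the main obstacle. The plan is a Hahn--Banach sandwich argument on the real parts. Define the sublinear functional
\[
p(f)=\inf\{\rho(a)+\beta((f-a)^+)\colon a\in\A_{\R}\}
\]
on real $\ell^\infty(\N)$. One checks that $p\geq\rho$ on $\A_{\R}$, using $\rho\leq\beta$. We then extend $\rho$ dominated by $p$. Any such extension $\tilde\rho$ satisfies:
\begin{itemize}
\item $\tilde\rho(f)\leq\beta(f^+)$, which gives $\tilde\rho\leq\beta$ on $f\geq0$;
\item $-\tilde\rho(f)\leq p(-f)\leq\rho(0)+\beta(0)=0$ for $f\geq0$ (take $a=0$), which gives positivity.
\end{itemize}
To obtain shift-invariance we average. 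The set of extensions of this kind is nonempty, convex and weak$^*$ compact, and it is preserved by $\rho\mapsto\rho\circ\sigma$, because $\beta$ and $\rho$ are invariant and $\A$ is shift-invariant. Markov--Kakutani then yields a fixed point, since $\sigma$ generates an abelian semigroup of continuous affine maps. Alternatively, one can take weak$^*$ limits of Cesàro averages $\frac1N\sum_{n<N}\tilde\rho\circ\sigma^n$, which stay in the set.

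Complex-valued functions are handled by the usual real/imaginary splitting. Separability is not needed anywhere.

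An alternative route is to realize $\beta$ as a measure on the Stone--Čech side, $\beta\N$ with the shift, and $\A$ as $C(Y)$ for a factor $Y$. Then the claim is that a factor of an ergodic measure is ergodic: an invariant set in $Y$ pulls back to an invariant set in $\beta\N$. This is immediate, provided ergodic means correspond to ergodic invariant measures on these compact spaces, which is the standard identification of extreme points of invariant measures with ergodic measures. Since that identification is itself standard, this measure-theoretic route may be the shortest write-up.
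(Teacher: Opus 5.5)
Your proposal is correct and is essentially the paper's argument: the paper likewise notes $t\beta_1\leq\beta|_{\A}$, extends $t\beta_1$ by the ordered Hahn--Banach theorem to a positive functional on $\ell^\infty(\N)$ dominated by $\beta$, restores shift-invariance by averaging, and concludes from the extremality of $\beta$. The differences are cosmetic. The paper dominates by the simpler sublinear functional $p(f)=\inf\{\beta(g)\colon g\geq 0,\ g\geq f\}=\beta(f^+)$, which would serve equally well in your Step 3. It also averages by applying an auxiliary shift-invariant mean to $n\mapsto\tilde\rho(\sigma^n f)$, rather than invoking Markov--Kakutani or Ces\`aro limits.
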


\begin{proof}
Suppose that $\beta|_{\A}$ is not an extreme point among the shift-invariant means on $\A$.
Then, there exist shift-invariant means $\beta_1,\beta_2$ on $\A$ and a number
$0<t<1$ such that
$$\beta|_\A=t\beta_1+(1-t)\beta_2.$$
Define
$
\gamma_1:=t\beta_1,
\gamma_2:=(1-t)\beta_2.
$
Then
$$
\gamma_1+\gamma_2=\beta|_\A,
$$
and therefore
$$
0\leq \gamma_1(f)\leq \beta|_\A(f)
\ \text{for all } f\geq 0,
$$since $\gamma_2$ is a positive functional.

By the ordered Hahn--Banach extension theorem\footnote{Since the functionals take complex values, while the ordered Hahn--Banach theorem is stated in most sources for real valued functionals, one may proceed as follows: on the (real) vector space of self-adjoint elements of $\A$, we note that $\gamma_1$ is dominated by the sub-linear functional $p(f)=\inf\{\beta(g), g\geq 0, g\geq f\}$. We then apply the Hahn--Banach theorem to construct a real functional on the real vector space of self-adjoint elements of $\ell^{\infty}(\N)$, which is dominated by $\beta$ on positive elements, and then complexify it via routine procedure.} (\cite[Theorem 3.2]{Rudin1991}), there exists
a positive functional $\widetilde{\gamma}_1$ on $\ell^\infty(\N)$ such that
\[
\widetilde{\gamma}_1|_\A=\gamma_1
\]
and
\[
 \widetilde{\gamma}_1(f)\leq \beta(f) 
\]for all positive $f\in \ell^{\infty}(\N)$.
Note that the last inequality forces the functional $\wt{\gamma_1}$ on the $C^*$-algebra $\ell^{\infty}(\N)$ to be bounded and combined with the fact that $\norm{\wt{\gamma}_1}=\norm{\gamma_1}=\gamma_1(1)=\wt{\gamma}(1)$, this implies the positivity of $\wt{\gamma}_1$. 
However, it is not necessarily shift-invariant. To overcome this problem, we take a shift-invariant mean
$\gamma$ on $\ell^\infty(\N)$ and average along shifts. More specifically, for
$f\in\ell^\infty(\N)$, define
$$
\gamma_1^*(f)
:=
\gamma\big(
n\mapsto \widetilde{\gamma}_1(\sigma^n f)
\big).
$$
Then $\gamma_1^*$ is a positive shift-invariant functional on $\ell^\infty(\N)$.
Moreover, since
\[
0\leq \widetilde{\gamma}_1(\sigma^n f)
\leq \beta(\sigma^n f)
=\beta(f)
\qquad \text{for }f\geq 0,
\]
we have
\[
0\leq \gamma_1^*\leq \beta .
\]
Since $\gamma_1$ is already shift-invariant on $\A$, for every $f\in \A$,
\[
\gamma_1^*(f)
=
\gamma \big(
n\mapsto \widetilde{\gamma}_1(\sigma^n f)
\big)
=
\gamma\big(
n\mapsto \gamma_1(\sigma^n f)
\big)
=
\gamma\!\left(
n\mapsto \gamma_1(f)
\right)
=
\gamma_1(f).
\]
Hence
\[
\gamma_1^*|_\A=\gamma_1 .
\]
Set $\gamma_2^*:=\beta-\gamma_1^*$.
Since $0\leq \gamma_1^*\leq \beta$, the functional $\gamma_2^*$ is positive and
shift-invariant.

We have established that the functionals $\gamma_1^*, \gamma_2^*$ are positive and shift-invariant, so we normalize them to make them means.
Note that
\[
\gamma_1^*(1)=\gamma_1(1)=t\in (0,1).
\]
Consequently,
\[
\beta
=
t\frac{\gamma_1^*}{t}
+
(1-t)\frac{\gamma_2^*}{1-t},
\]
where both $\gamma_1^*/t$ and $\gamma_2^*/(1-t)$ are shift-invariant means on
$\ell^\infty(\N)$.
Since $\beta$ is extreme among shift-invariant means on $\ell^\infty(\N)$, it
follows that
\[
\frac{\gamma_1^*}{t}=\beta.
\]
Restricting to $\A$ gives
\[
t\beta_1
=
\gamma_1
=
\gamma_1^*|_\A
=
t\,\beta|_\A ,
\]
which implies that
\[
\beta_1=\beta|_\A .
\]
The same argument applied to $\gamma_2$ yields
\[
\beta_2=\beta|_\A .
\]

In conclusion, every convex decomposition of $\beta|_A$ into shift-invariant means on
$\A$ is trivial. Hence $\beta|_A$ is an extreme point of the set of
shift-invariant means on $\A$.
\end{proof}

\section{Dynamical reformulation of the main result}

The purpose of this section is to reduce our main combinatorial result, Theorem~\ref{Thm_maindensity}, to a dynamical result about recurrence in ergodic systems.

The passage to a dynamical statement relies on the following version of Furstenberg's correspondence principle.
Similar results appear numerous times in the literature, though they are typically stated with averages along \Folner\ sequences in place of shift-invariant means.

\begin{Proposition}[Correspondence principle]\label{prop_correspondence principle}
     Let $\A$ be a separable, unital, shift invariant $C^{*}$-subalgebra of $\ell^{\infty}(\N)$ and let $\beta$ be an ergodic shift-invariant mean on $\A$. Then there exists a compact metric space $X$, a continuous map $T\colon X\to X$, a $T$-invariant Borel probability measure $\mu$ on $X$ that is ergodic with respect to $T$, and an isometric $C^{*}$-isomorphism  $\Psi:\A\to C(X)$, such that for all positive integers $\ell$ and sequences $f_1,\ldots,f_{\ell} \in \A$, there exist $F_{1},\ldots, F_\ell\in C(X)$, such that for all $h_1\ldots, h_{\ell}\in \Z$, we have
        \begin{equation}\label{eq_furstenberg system identity0}
    \beta\bigg(n\mapsto f_1(n+h_1)\cdot\ldots\cdot f_{\ell}(n+h_{\ell})\bigg)=\int T^{h_1}F_1\cdot \ldots\cdot T^{h_{\ell}}F_{\ell}\, d\mu.
    \end{equation}
   In addition, if $f_1,\ldots, f_k$ are positive elements of $\A$, then the functions $F_1,\ldots, F_k$ are non-negative.
\end{Proposition}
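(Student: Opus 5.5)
We will use Gelfand duality. Since $\A$ is a unital commutative $C^*$-algebra, it is isometrically $*$-isomorphic to $C(X)$, where $X$ is the spectrum of $\A$, that is, the space of nonzero multiplicative linear functionals (characters) on $\A$ with the weak$^*$ topology. We let $\Psi\colon\A\to C(X)$ be the Gelfand transform $\Psi(f)(x)=x(f)$. Because $\A$ is separable, $X$ is a compact metrizable space. Concretely, if $(g_j)$ is a dense sequence in the unit ball of $\A$, then $d(x,y)=\sum_j 2^{-j}|x(g_j)-y(g_j)|$ is a metric inducing the topology.

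Next we build the map $T$. The shift $\sigma$ restricts to a unital $*$-homomorphism of $\A$. So for $x\in X$, the functional $Tx:=x\circ\sigma$ is again a character, and $T\colon X\to X$ is weak$^*$ continuous. By construction $\Psi(\sigma f)=\Psi(f)\circ T$, and by iteration $\Psi(\sigma^h f)=T^h\Psi(f)$ for $h\geq 0$.

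The mean $\beta$ is a positive normalized functional on $\A$. Hence $\beta\circ\Psi^{-1}$ is a positive normalized functional on $C(X)$, and by the Riesz representation theorem it equals integration against a Borel probability measure $\mu$. Shift-invariance of $\beta$ gives $\int F\circ T\,d\mu=\int F\,d\mu$ for all $F\in C(X)$, so $\mu$ is $T$-invariant.

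For ergodicity we use the correspondence between invariant probability measures and shift-invariant means. Shift-invariant means on $\A$ correspond affinely and bijectively, via $\Psi$ and Riesz, to $T$-invariant Borel probability measures on $X$. Thus $\beta$ being an extreme point makes $\mu$ an extreme point of the invariant measures, which is equivalent to ergodicity. For completeness we recall the standard argument: if $A$ is invariant with $0<\mu(A)<1$, then $\mu=\mu(A)\mu_A+\mu(A^c)\mu_{A^c}$ is a nontrivial convex decomposition into invariant measures.

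For the identity \eqref{eq_furstenberg system identity0} we set $F_i=\Psi(f_i)$. Since $\Psi$ is multiplicative and intertwines $\sigma$ with $T$, we get
\[
\beta\Big(\prod_i \sigma^{h_i}f_i\Big)=\int \Psi\Big(\prod_i\sigma^{h_i} f_i\Big)\,d\mu=\int\prod_i T^{h_i}F_i\,d\mu
\]
whenever all $h_i\geq 0$. Positivity is preserved because $\Psi$ is a $*$-isomorphism, so $\Psi(bb^*)=|\Psi(b)|^2\geq 0$.

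The main obstacle is allowing negative shifts $h_i\in\Z$, since $T$ need not be invertible and $n+h_i$ may leave $\N$. We handle this with invariance. Let $h=\min_i h_i$. Shifting everything by $-h$ and using shift-invariance of $\beta$ (for $\beta$) and $T$-invariance of $\mu$ (for the integral), we reduce both sides to the case of nonnegative exponents. Here $T^{h_i}F_i$ for negative $h_i$ is interpreted via this normalization, i.e.\ in the sense $\int \prod T^{h_i-h}F_i\,d\mu$; equivalently one may pass to the natural invertible extension. Finitely many boundary values of $f_i(n+h_i)$ for small $n$ do not affect $\beta$, because shift-invariant means vanish on finitely supported sequences.
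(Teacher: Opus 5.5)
Your proposal is correct and follows essentially the same route as the paper: the Gelfand spectrum $X$ with $T\phi=\phi\circ\sigma$, Riesz representation of $\beta\circ\Psi^{-1}$ to get $\mu$, ergodicity from the affine correspondence between invariant means and invariant measures (which preserves extreme points), and $F_i=\Psi(f_i)$ together with multiplicativity and $\Psi(\sigma f)=\Psi(f)\circ T$. Your extra step for negative shifts $h_i$ (shifting everything by $-\min_i h_i$) covers a point the paper passes over silently, and it is a sensible way to read the statement when $T$ is not invertible.
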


\begin{proof}Recall that $\sigma$ denotes the shift operator on $\ell^{\infty}(\N)$.
    We let $$X=\{\phi:\A\to \C\colon \ \phi \text{ is a multiplicative linear functional}\}$$
be the Gelfand spectrum of $\A$. This is a compact Hausdorff space with the weak*-topology (the subspace topology inherited by the weak* topology on the dual of $\A$). The weak*-topology on $\A^{*}$ is metrizable since $\A$ is separable. Thus, $X$ is a compact metrizable space. Finally, by the Gelfand-Naimark theorem, there exists an isometric $C^*$-isomorphism $\Psi: \A\to C(X)$.

We denote by $T$ the transformation $$T: X\to X, \ T\phi(a)=\phi(\sigma(a))\ \text{for every } \phi\in X,$$ 
which is easily checked to be a well-defined, continuous function on $X$. 

To construct the measure on $X$, we consider the push-forward of the linear functional $\beta$. Namely, if we define $\rho=\beta\circ \Psi^{-1} $, then it is straightforward to check that this is a positive linear functional on the vector space $C(X)$ (with the supremum norm). Invoking the Riesz-Markov-Kakutani representation theorem, we can find a Radon measure $\mu$ on $X$ characterized by \begin{equation}\label{eq_characterization of mu}
    \rho(F)= \int F \, d\mu\ \text{for every } F\in C(X).
\end{equation} Since $\beta$ is shift-invariant, it follows that the measure $\mu$
is $T$-invariant.

We claim that the measure $\mu$ is ergodic.  It is known (see \cite[Theorem 4.1, Theorem 4.4]{EW11}) that the space of Borel $T$-invariant probability measures on the compact metric space $X$ is a non-empty, convex set and its extreme points are precisely the ergodic $T$-invariant probability measures on $X$. Arguing by contradiction, we suppose that $\mu$ is not ergodic and, thus, not an extreme point for the set of $T$-invariant probability measures on $X$. This implies that we can find $T$-invariant measures $\mu_1,\mu_2$ and $t\in (0,1)$ such that
$$\mu=t\mu_1+(1-t)\mu_2.$$
Naturally, it follows that for the functionals $\rho_1,\rho_2$ corresponding to $\mu_1,\mu_2$, we also have $$\rho=t\rho_1+(1-t)\rho_2.$$
Consequently, $$\beta=t\beta_1+(1-t)\beta_2,$$where  $\beta_1=\rho_1\circ \Psi $ and $\beta_2=\rho_2\circ \Psi$ are shift-invariant means on $\A$. This contradicts the ergodicity assumption on $\beta$, and the claim follows. 

Let $f_1\ldots, f_k\in \A$ be elements of $\A$. We define the functions $F_i=\Psi({f_i})$. We show that if the elements $f_i$ are positive, then the functions $F_i$ are also positive. This follows from the fact that $\Psi$ is a positive operator. More simply,
if the elements $f_i$ are positive, we can find elements $b_1,\ldots, b_k\in \A$ such that $f_i=b_1\cdot b_1^{*}+\dots+b_k\cdot b_k^{*}$. Since $\Psi$ is a $C^*$-algebra homomorphism, we derive $F_i=G_1\cdot \overline{G_1}+\dots+G_k\cdot \overline{G_k}$, where $G_i=\Psi(b_i)$. This confirms our claim.

It remains to establish \eqref{eq_furstenberg system identity0}.
We note that the left-hand side of \eqref{eq_furstenberg system identity0} can be rewritten as \begin{equation*}
    \beta\left(\sigma^{h_1}{f_1}\cdot\ldots\cdot \sigma^{h_{\ell}}{f_{\ell}}\right).
\end{equation*}

The identity ${\Psi}(\sigma( a))=T({\Psi}(a))$ implies that the 
 image of $\sigma^{h_j}f_j$
is the function $T^{h_j}F_j$. 
Since ${\Psi}$ is an algebra homomorphism, it follows that the image of the sequence $\sigma^{h_1}f_1\cdot\ldots\cdot \sigma^{h_{\ell}}f_{\ell}$ is the function $T^{h_1}F_1\cdot\ldots \cdot T^{h_k}F_k$. The conclusion follows from \eqref{eq_characterization of mu}.
\end{proof}

We can now state the main dynamical statement that will be used to establish \cref{Thm_maindensity}.

\begin{Theorem}[Main dynamical theorem]\label{prop_dynamical statement}
   Let $s\in \N$, $P\in \Z[x]$ with $P(0)=0$ and \(\gamma\) be a shift-invariant mean. Assume $(X,\mathcal{X},\mu,T)$ is an ergodic measure preserving system and $(F_m)_{m\in \N}$ is a sequence of measurable functions on $X$ taking values in $[0,1]$ such that the quantity 
\begin{equation}\label{eq_positivity of averages of measurable functions along Bohr sets}
         \limsup_{B\in \mathrm{PB}_0}\gamma_B\Big(m\mapsto \int F_{m}\, d\mu\big)>0
    \end{equation}
   Then, we have $$\int F_{a}\cdot T^{\frac{1}{a^s} P(\frac{m}{a})}F_a\cdot  F_m\, d\mu> 0.$$ for infinitely many $a,m\in \N$ with $a^{s+1}\mid m$.
\end{Theorem}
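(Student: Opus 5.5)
The plan is to substitute $m=a^{s+1}n$ and prove positivity for suitable $a$ by averaging over $n$. With this substitution,
\[
\tfrac{1}{a^s}P\big(\tfrac{m}{a}\big)=\tfrac{1}{a^s}P(a^s n)=:Q_a(n),
\]
which is an integer polynomial in $n$ with zero constant term because $P(0)=0$. So it suffices to find infinitely many $a$ and, for each such $a$, some $n$ with
\[
\int F_a\cdot T^{Q_a(n)}F_a\cdot F_{a^{s+1}n}\,d\mu>0 .
\]

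\textbf{Step 1 (choosing $a$ and extracting a limit function).} By \eqref{eq_positivity of averages of measurable functions along Bohr sets} there are $\delta>0$ and a cofinal family of polynomial Bohr$_0$ sets $B$ on which $\gamma_B(m\mapsto\int F_m\,d\mu)>\delta$. Since $\gamma$ is shift-invariant and every polynomial Bohr$_0$ set is almost convergent, I expect a standard pigeonholing to produce infinitely many $a$, lying in prescribed Bohr$_0$ sets, with
\[
\int F_a\,d\mu>\delta/2 \quad\text{and}\quad \gamma_{B'}\Big(n\mapsto\int F_{a^{s+1}n}\,d\mu\Big)>\delta/2
\]
for every $B'$ in a cofinal family. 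This uses that $\{n: a^{s+1}n\in B\}$ is again a polynomial Bohr$_0$ set.

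For each such $a$ and each $B'$, the functional $H\mapsto\gamma_{B'}\big(n\mapsto\int H\cdot F_{a^{s+1}n}\,d\mu\big)$ on $L^2(\mu)$ is positive and bounded. By Riesz representation it is given by some $0\le G_{B'}\le 1$ with $\int G_{B'}\,d\mu>\delta/2$.

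\textbf{Step 2 (splitting $F_a$).} Apply \cref{cor_decomposition via Kronecker} to write $F_a=F_{\rm str}+F_{\rm unf}$ with $0\le F_{\rm str}\le1$. The structured part is handled first. Approximate $F_{\rm str}$ in $L^2$ by a finite combination of eigenfunctions with eigenvalues $\lambda_1,\dots,\lambda_k$, and shrink $B'$ to the polynomial Bohr$_0$ set $\{n:|\lambda_i^{Q_a(n)}-1|\le\varepsilon\ \forall i\}$. On that set $T^{Q_a(n)}F_{\rm str}\approx F_{\rm str}$, so the structured contribution is approximately
\[
\gamma_{B'}\Big(n\mapsto\int F_a F_{\rm str}F_{a^{s+1}n}\,d\mu\Big).
\]
This is exactly why the density in \eqref{eq:definition-density} is localized on $\mathrm{PB}_0$. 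To bound this from below, I would rather arrange $a$ so that $F_a$ and $F_{a^{s+1}n}$ overlap non-trivially. The intended route is to apply the same splitting to $F_{a^{s+1}n}$ and to use the ergodicity of $\mu$ (from \cref{lem_ergodicity passes to factors} and \cref{prop_correspondence principle}), together with a Ramsey/pigeonhole choice among many candidates $a$ (infinitely many $a$ with $\int F_a>\delta/2$ in $[0,1]$), to force $\int F_a F_{\rm str}\, G\,d\mu\gg_\delta 1$.

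\textbf{Step 3 (weak-mixing part, the main obstacle).} One must show that
\[
\gamma_{B'}\Big(n\mapsto\int F_a\cdot T^{Q_a(n)}F_{\rm unf}\cdot F_{a^{s+1}n}\,d\mu\Big)
\]
is small. The difficulty is that the third function $F_{a^{s+1}n}$ varies with $n$ in an uncontrolled way, so one cannot simply pass to $G_{B'}$.

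The first attempt is to use \cref{L: mean on separable is Folner} to realize the relevant restriction of $\gamma_{B'}$ as a Følner average. I would then expand $1_{B'}$ into polynomial phases $e(R(n)\theta)$ and reduce to expressions of the form $\big\|\E_{n\in\Phi_M}e(R(n)\theta)\,T^{Q_a(n)}F_{\rm unf}\cdot T^{Q'(n)}\tilde G\big\|_{L^2}$. Here the dependence of $F_{a^{s+1}n}$ on $n$ is transferred, through the correspondence principle applied to the algebra generated by $m\mapsto F_m(T^{\cdot}x)$, into a second polynomial iterate. \cref{prop_optimal factor for 2 polynomials}, which is uniform in $\theta$, then kills this term since $F_{\rm unf}$ is weak-mixing.

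Making this transfer rigorous, so that the varying family $F_{a^{s+1}n}$ becomes a single function along a second polynomial orbit, is the step I expect to be hardest. If it fails, I would fall back on a van der Corput argument in $n$ that eliminates the third factor at the cost of a self-correlation of $F_{\rm unf}$.
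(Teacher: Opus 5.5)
\textbf{There is a genuine gap.} Several of your ingredients match the paper's: almost periodicity of the structured part on polynomial Bohr$_0$ sets, a \Folner{} realization of $\gamma$, and \cref{prop_optimal factor for 2 polynomials} with polynomial phase twists. The problem is the architecture: you fix one $a$ from Step~1 and average only over $n$. With $a$ fixed, the third factor $F_{a^{s+1}n}$ is an arbitrary $[0,1]$-valued function of $n$. Then the weak-mixing term need not be small, and the structured term need not be bounded below. Concretely, take a weakly mixing system, $F_a=1_A$ with $\mu(A)=1/2$, and $F_{a^{s+1}n}=1_{A\setminus T^{-Q_a(n)}A}$. This is compatible with the hypothesis and with both requirements of your Step~1. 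Yet $\int F_a\cdot T^{Q_a(n)}F_a\cdot F_{a^{s+1}n}\,d\mu=0$ for every $n$: the structured contribution $\tfrac12\mu(A\setminus T^{-Q_a(n)}A)$ is cancelled exactly by the weak-mixing one. So for a fixed $a$, none of your tools can finish the argument: no choice of $B'$, no appeal to ergodicity, and no van der Corput step in $n$, since differencing only produces $F_{a^{s+1}(n+h)}F_{a^{s+1}n}$, which is equally uncontrolled. The proposed ``transfer'' of $n\mapsto F_{a^{s+1}n}$ into a polynomial orbit $T^{Q'(n)}\tilde G$ is also unavailable. The $F_m$ are arbitrary, and in the combinatorial application they come from the dilations $n\mapsto 1_E(m^sn)$, which are not iterates of the shift.

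\textbf{The missing idea} is to average over $a$ in the same way as over $m$, and to exploit the resulting symmetry. The paper chooses Bohr sets $B_\ell\subseteq(\ell!)^{s+1}\N$ on which $T^{\frac{1}{a^s}P(\frac{m}{a})}F_{a,\mathrm{str}}\approx F_{a,\mathrm{str}}$ holds simultaneously for all $a\le\ell$. Thus the $m$-average does not depend on $a$. It then studies $\E_{a\in B_{\ell_2}\cap\Phi_{M_2}}\E_{m\in B_{\ell_1}\cap\Phi_{M_1}}$, taking the $m$-limits first.

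For the weak-mixing term, apply Cauchy--Schwarz in $(m,x)$ with the $a$-average \emph{inside} the square. This discards $F_m$ entirely and leaves $\E_{a,b}\E_m\int F_aF_b\,T^{P_a(m)}F_{a,\mathrm{unf}}\,T^{P_b(m)}F_{b,\mathrm{unf}}\,d\mu$. For $a\neq b$ the polynomials $P_a,P_b$ are essentially distinct, so \cref{prop_optimal factor for 2 polynomials} applies; this is where ergodicity is needed, not for positivity. The diagonal $a=b$ is negligible.

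For the structured term, almost periodicity and $0\le F_{m,\mathrm{str}}\le 1$ reduce it to $\E_a\E_m\int (F_aF_{a,\mathrm{str}})(F_mF_{m,\mathrm{str}})\,d\mu$. Because $a$ and $m$ run through the same averaging scheme, a Ramsey-theorem argument identifies this with essentially $\int\big(\E_mF_mF_{m,\mathrm{str}}\big)^2d\mu\ge\big(\E_m\int F_mF_{m,\mathrm{str}}\,d\mu\big)^2$, which leads to $\delta^4$. Your parametrization $m=a^{s+1}n$, with $a$-dependent sets $B'$ and hence an $a$-dependent limit function $G_{B'}$, destroys exactly this square structure.
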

\begin{Remark}\label{Remark: counterexample}
    We record here that the ergodicity assumption in this statement is crucial. Specializing to the case $s=1$ and $P(m)=m$, consider the non-ergodic transformation\footnote{This is an adaptation of a construction by Moreira (personal communication).} $T(x,y)=(x,x+y)$ on $\T^2$ with the Lebesgue measure. Take $A=(\frac{1}{4}, \frac{1}{2})\times (\frac{1}{2}, \frac{3}{4})$ and set $F_m(x,y)=\1_{A}(mx,m^2y)$. It is not difficult to prove that $\int F_m\,d\mu=\frac{1}{16}$ for all $m\in \N$, but the function $F_a\cdot T^{\frac{m}{a^2}}F_a\cdot F_m$ is identically zero for all $(a,m)$ with $a^2\mid m$.
    \end{Remark}
\begin{proof}[Proof of \cref{Thm_maindensity} assuming \cref{prop_dynamical statement}]

Fix $s\in \N$, let $E$ and $P\in \Z[x]$ be as in the statement of \cref{Thm_maindensity} and let let $\gamma$ and $\beta$ be as in \cref{def_new_density}. 

Let $\A$ be the affinely invariant $C^*$-algebra generated by $\1_{E}$. Applying Lemma~\ref{lem_ergodicity passes to factors}, we deduce that the restriction $\beta|_\A$ to $\A$ is ergodic. It is immediate from the formula for the density in \cref{def_new_density} that $E$ satisfies $$\limsup_{B\in \text{PB}_0}\gamma_{B}\big(m\mapsto \beta|_{\A}(n\mapsto \,\1_{E}(m^sn))\big)>0,$$
    since the sequence $n\mapsto \1_{E}(m^sn)$ is also an element of $\A$.

We apply Proposition \ref{prop_correspondence principle} for the algebra $\A$ and the mean $\beta|_\A$. We let $(X,\mu, T)$ be the ergodic measure-preserving system and $\Psi\colon \A\to C(X)$ be the $C^{*}$-isomorphism provided by this proposition.
Furthermore, for every $a\in\N$, let $f_a(n)=1_E(a^sn)$ and 
define $F_{a}= \Psi(f_a)$. 
Note that $F_a$ is a continuous function on $X$ taking values in $[0,1]$.

We deduce from \eqref{eq_furstenberg system identity0} that $$\beta|_\A(n\mapsto 1_E(m^sn))=\int F_m\ d\mu$$ for every $m\in \N$, which implies that $$\limsup_{B\in \text{PB}_0}\gamma_{B}\Big(m\mapsto \beta\big(n\mapsto \,\1_{E}(m^sn)\big)\Big)=\limsup_{B\in PB_0}\gamma_B\left(m\mapsto \int F_m\, d\mu\right).$$

Our assumption on the positivity of the left-hand side implies that 
the sequence $F_m$ of continuous functions and the mean $\gamma$ satisfy \eqref{eq_positivity of averages of measurable functions along Bohr sets}.
It follows that there exist infinitely many $a,m\in \N$ with $a^{s+1}\mid m$
such that $$\int F_{a}\cdot T^{\frac{1}{a^s} P(\frac{m}{a})}    F_a\cdot  F_m\, d\mu> 0.$$
However, using the identity \eqref{eq_furstenberg system identity0} and the fact that $F_m$ is the image of $n\mapsto 1_E(m^sn)$, we deduce that \begin{equation*}
    \beta\left(n\mapsto \1_{E}(a^sn)\1_E\left(a^s\left(n+\tfrac{P(m/a)}{a^s}\right)\right)\1_{E}(m^sn)\right)=\int F_{a}\cdot  T^{\frac{1}{a^s} P(\frac{m}{a})}  F_a\cdot  F_m\, d\mu.
\end{equation*}
Therefore, we can find infinitely many triples $(a,m,n)$ with $a^{s+1}\mid m$ so that $$\1_{E}(a^sn)\1_E\left(a^sn+P(\tfrac{m}{a})  \right)\1_{E}(m^sn)>0.$$
Choosing $x=a^sn$ and $y=m/a$, we deduce that $\{x,x+P(y),xy^s\}\subset E$.
\end{proof}
It remains to establish \cref{prop_dynamical statement}, which is the content of the following section.

\section{Proof of the dynamical statement}

In this section, we establish \cref{prop_dynamical statement}. More precisely, we prove the following more technical result, from which \cref{prop_dynamical statement} follows directly. 

\begin{Proposition}\label{prop_dynamical statement full}
    Let $s\in \N$, $P\in \Z[x]$ with $P(0)=0$ and \(\gamma\) be a shift-invariant mean. Assume $(X,\mathcal{X},\mu,T)$ is an ergodic measure-preserving system and $(F_m)_{m\in \N}$ is a sequence of measurable functions on $X$ taking values in $[0,1]$ such that the quantity 
    \begin{equation}\label{eq_positivity of averages of measurable functions along Bohr sets in full}
          \delta:= \limsup_{B\in \mathrm{PB}_0}\gamma_B\Big(m\to \int F_{m}\, d\mu\big)
    \end{equation}
    is positive. Then, for every $\e>0$, there exists a sequence of polynomial  Bohr$_0$ sets $(B_{\ell})_{\ell\in \N}$ and a \Folner{} sequence $(\Phi_M)_{M\in\N}$ such that:

\begin{enumerate}[label=(\roman{enumi}),ref=(\roman{enumi})]
    \item\label{itm_i} $a^{s+1}\mid m$ whenever $a\leq \ell_1$ and $m\in B_{\ell_1}$,
    \item\label{itm_ii}
    we have \[ 
   \lim_{\ell_2\to\infty}
    \lim_{M_2\to\infty}
    \lim_{\ell_1\to\infty}
    \lim_{M_1\to\infty} \E_{a\in B_{\ell_2}\cap \Phi_{M_2}}  \E_{m\in B_{\ell_1}\cap \Phi_{M_1}}\int F_a\cdot T^{\frac{1}{a^s} P(\frac{m}{a})}F_a\cdot F_m\,d\mu\geq \delta^4-\epsilon,
\]  where all the limits in the previous expression exist.
\end{enumerate}
In particular, $$\int F_{a}\cdot T^{\frac{1}{a^s} P(\frac{m}{a})} F_a\cdot  F_m\, d\mu\geq \delta^4-\e$$ for infinitely many $a,m\in \N$ with $a^{s+1}\mid m$.
\end{Proposition}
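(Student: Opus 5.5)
We would prove the statement by splitting the inner functions into their Kronecker and weak-mixing parts, then showing that the weak-mixing part contributes nothing to the iterated average and that the structured part gives the bound $\delta^4-\epsilon$.

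\textbf{Step 1: Fixing the data.} First we fix $B\in\mathrm{PB}_0$ and a \Folner{} sequence. By the definition of $\limsup$ along $\mathrm{PB}_0$, for every $B'\in\mathrm{PB}_0$ there is $C$ with $\gamma_{C\cap B'}(m\mapsto\int F_m\,d\mu)\geq\delta-\epsilon'$. The relevant sequences generate a separable algebra, so \cref{L: mean on separable is Folner} lets us realize $\gamma$ on it by a \Folner{} sequence $(\Phi_M)$.

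\textbf{Step 2: Constructing the nested Bohr sets $B_\ell$.} We build the $B_\ell$ inductively as decreasing intersections of polynomial Bohr$_0$ sets. Each $B_\ell$ is contained in $(\ell!)^{s+1}\N$, which is a Bohr$_0$ set (take $\lambda=e(1/q)$), and this gives \ref{itm_i}. Each $B_\ell$ also retains relative $\gamma$-average of $\int F_m\,d\mu$ at least $\delta-\epsilon'$. At stage $\ell$ we further intersect with Bohr$_0$ sets along which finitely many eigenvalue phases $\lambda^{P(m/a)/a^s}$ are close to $1$, for $a\leq\ell$ and for $\lambda$ ranging over a finite set of eigenvalues that approximates the relevant Kronecker components. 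This is a finite list of polynomial phases with zero constant term, so the resulting set is again in $\mathrm{PB}_0$. We pass to subsequences in $M_1,M_2$ so that all the limits exist; diagonalization works because there are only countably many averages.

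\textbf{Step 3: Inner average in $m$, with $a$ fixed.} Write $F_a=F_a^{\mathrm{str}}+F_a^{\mathrm{unf}}$ by \cref{cor_decomposition via Kronecker}. The indicator $\1_{B_{\ell_1}}(m)$ can be approximated by trigonometric polynomials in phases $e(Q(m)\theta)$, and $m$ is restricted to $a^{s+1}\N$, so we substitute $m=a^{s+1}k$. The inner average then becomes a combination of averages $\E_k e(Q(k)\theta)\,T^{P(a^sk)/a^s}F_a\cdot F_{a^{s+1}k}$. The difficulty is that $F_m$ varies with $m$. We handle this by pairing with $\overline{F_a}$ and applying Cauchy--Schwarz and van der Corput in $k$, reducing to averages of the form in \cref{prop_optimal factor for 2 polynomials}. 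That proposition shows the $F_a^{\mathrm{unf}}$ term vanishes in the limit.

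\textbf{Step 4: The structured part.} For $F_a^{\mathrm{str}}$, approximate it in $L^2$ by a finite combination of eigenfunctions. On $B_{\ell_1}$ (for $\ell_1$ large compared with $a$) the step-2 phase conditions make $T^{P(m/a)/a^s}F_a^{\mathrm{str}}\approx F_a^{\mathrm{str}}$. The inner limit is then approximately $\int F_a\,F_a^{\mathrm{str}}\cdot G\,d\mu$, where $G$ is a weak limit of the averages of $F_m$ over $B_{\ell_1}\cap\Phi_{M_1}$. Note $\int G\,d\mu\geq\delta-\epsilon'$. By ergodicity and the Bohr localization, $G$ can be taken close to its own structured part.

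\textbf{Step 5: Outer average in $a$.} Now average over $a\in B_{\ell_2}$. Since $\int F_aF_a^{\mathrm{str}}\,G=\int F_a^{\mathrm{str}}\,F_a^{\mathrm{str}}\,G+\int F_a^{\mathrm{unf}}F_a^{\mathrm{str}}\,G$, we need $\E_a\int (F_a^{\mathrm{str}})^2G\geq\delta^4-\epsilon$. The plan is to apply Cauchy--Schwarz twice, in the form $\E_a\int (F_a^{\mathrm{str}})^2G\geq(\E_a\int F_a^{\mathrm{str}}G^{1/2})^2$ together with Hölder, and to use that averaging $F_a^{\mathrm{str}}$ over $a\in B_{\ell_2}$ gives a function $H$ with $\int H\geq\delta-\epsilon'$ that correlates with $G$. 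Controlling this correlation is where the common localization to Bohr sets is essential. The cross terms with $F_a^{\mathrm{unf}}$ should again vanish after averaging over $a$, using weak mixing.

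\textbf{Main obstacle.} I expect the main obstacle to be Step 3: the $F_m$ vary with $m$, so we cannot directly invoke characteristic-factor results. Getting a van der Corput reduction in which the $F_m$-dependence drops out, while keeping control of the Bohr-set weights, is the delicate point.
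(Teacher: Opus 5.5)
\textbf{There is a genuine gap in Step 3, and a second one in Steps 4--5.}

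\textbf{Step 3.} You rightly flag this as the crux, but it cannot be carried out, because what it asserts is false. For a \emph{fixed} $a$, the contribution of $F_a^{\mathrm{unf}}$ to the inner $m$-average need not vanish. The $F_m$ are arbitrary, so they can correlate with $F_a\cdot T^{n_a(m)}F_a^{\mathrm{unf}}$, where $n_a(m)=P(m/a)/a^s$. Concretely, take a mixing system, nonconstant $P$, and nonconstant $F_a$. Put $F_m=\1_{\{T^{n_a(m)}F_a^{\mathrm{unf}}>0\}}$ whenever $a^{s+1}\mid m$, $m\neq a$, and $F_m=\1_{\{F_a^{\mathrm{unf}}>0\}}$ otherwise. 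Then $\delta=\mu(F_a^{\mathrm{unf}}>0)>0$, yet
\[
\int F_a\cdot T^{n_a(m)}F_a^{\mathrm{unf}}\cdot F_m\,d\mu\to\int F_a\,d\mu\int(F_a^{\mathrm{unf}})^+d\mu>0 .
\]
Van der Corput in $k$ does not help, since it only produces products $F_{m(k)}F_{m(k+h)}$, which are equally uncontrolled. Also, Proposition~\ref{prop_optimal factor for 2 polynomials} needs two polynomial iterates, whereas your second factor is $F_m$.

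The cancellation only appears after averaging over $a$. This is why the paper applies Cauchy--Schwarz in $(m,x)$ with the $a$-average \emph{inside},
\[
\Big|\E_{a}\E_{m}\int F_a\cdot T^{n_a(m)}F_a^{\mathrm{unf}}\cdot F_m\,d\mu\Big|^2\leq \E_{m}\int\Big|\E_{a}F_a\cdot T^{n_a(m)}F_a^{\mathrm{unf}}\Big|^2d\mu ,
\]
which discards $F_m$ using only $0\leq F_m\leq 1$. Expanding the square gives pairs $(a,b)$. The diagonal costs $\Oh(1/|B_{\ell_2}\cap\Phi_{M_2}|)$ and dies in the $M_2$-limit. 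For $a\neq b$, the substitution $m\mapsto(ab)^{s+1}m$ produces two iterates $T^{P_a(m)}F_a^{\mathrm{unf}}$ and $T^{P_b(m)}F_b^{\mathrm{unf}}$ with distinct leading coefficients, plus Bohr-set weights. That is exactly the input of Proposition~\ref{prop_optimal factor for 2 polynomials}, and it is why the limits are ordered with $a$ outside and $m$ inside.

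\textbf{Steps 4--5.} Your positivity argument rests on two false claims.
\begin{enumerate}
\item A weak limit $G$ of averages of $F_m$ need not be close to its structured part. If $F_m=F$ for all $m$, then $G=F$, whose weak-mixing component is arbitrary; restricting the index $m$ to Bohr sets does nothing to the functions.
\item Consequently the cross term $\int F_a^{\mathrm{unf}}F_a^{\mathrm{str}}G\,d\mu$ does not vanish in general. In that example, in a weakly mixing system, it equals $(\int F\,d\mu)\,\|F^{\mathrm{unf}}\|_2^2$.
\end{enumerate}
The missing idea is a symmetrization, not any structure of $G$. Since $0\leq F_m^{\mathrm{str}}\leq 1$ and the integrand is non-negative, you may replace $F_m$ by $H_m:=F_mF_m^{\mathrm{str}}$. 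After removing the shift via the Bohr almost-periods, you then face $\E_a\E_m\int H_aH_m\,d\mu$, with the same function of the index on both sides. Because $a$ and $m$ run over the same sets $B_\ell\cap\Phi_M$, this iterated double average is bounded below by the square of the iterated single average of $\int H_m\,d\mu$. The paper proves this with Ramsey's theorem and Cauchy--Schwarz. Alternatively, pass to subsequences so that $\E_{m\in B_\ell\cap\Phi_M}H_m$ converges weakly to some $G'$; the double limit is then $\|G'\|_2^2\geq(\int G'\,d\mu)^2$. Finally, $\int F_mF_m^{\mathrm{str}}\,d\mu=\|F_m^{\mathrm{str}}\|_2^2\geq(\int F_m\,d\mu)^2$, which gives the bound $(\delta-\epsilon)^4$.
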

\begin{Remark}
The existence of the limits in part~\ref{itm_ii} is only a technicality, as we can always pass to subsequences of $(\Phi_M)_{M\in\N}$ and $(B_{\ell})_{\ell\in\N}$ such that these limits exist.
We also note that when $a^{s+1}\mid m$, the fraction $\frac{1}{a^s}P(\frac{m}{a})$ is an integer since $P(0)=0$.
Therefore, by part~\ref{itm_i}, the expression $ T^{\frac{1}{a^s} P(\frac{m}{a})}$ is well-defined because the order of the limits allows us to restrict to $\ell_1\geq a$.
\end{Remark}
\begin{proof}

Our first objective is to construct the Bohr sets $(B_{\ell})_{\ell\in \N}$.
For every $m\in \N$, we use Corollary \ref{cor_decomposition via Kronecker} to write
$$F_{m}=F_{m,\mathrm{str}}+F_{m,\mathrm{unf}}$$
for functions $F_{m,\mathrm{str}},F_{m,\mathrm{unf}}\in L^{\infty}(\mu)$, such that $F_{m,\mathrm{str}}\in L^{\infty}(\mu)$ is a compact function and $F_{m,\mathrm{unf}}$ is a weak-mixing function.

For every $\ell\in\N$, we can find a polynomial Bohr$_0$ set $C_{\ell}\subset\N$, such that  for all $a\in\{1,\ldots,\ell\}$
\begin{align}
a^{s+1}\mid m,\qquad \text{for all } m\in C_{\ell}
\label{eqn_bohr_set_divisibility_assumption}\\
\sup_{m\in C_{\ell}} 
\int \big|F_{a,\mathrm{str}}\big(T^{\frac{1}{a^s}P(\frac{m}{a})}x)-  F_{a,\mathrm{str}}(x)\big|^2 d\mu(x)\ \leq \frac{1}{\ell}\label{eqn_bohr_set_of_almost_periods}.
\end{align}
To see why this is true, note first that the set of $m$ that are divisible by $(\ell!)^{s+1}$  is a polynomial Bohr$_0$ set, and all elements of this set satisfy the first condition. For the second assertion, note that for every $a\in \N$, there exist $t_a\in \N$, eigenfunctions $g_{a,1},\ldots, g_{a,t_a}\in L^2(\mu)$, and coefficients $c_{a,1},\ldots, c_{a,t_a}$, such that \begin{equation}\label{eq_approximation of structured by eigenfunctions}
\Big\|F_{a,\mathrm{str}}-\sum_{t\leq t_a}c_{a,t}g_{a,t}\Big\|_{L^2(\mu)}^2\leq \frac{1}{3\ell}.
\end{equation}Additionally, let $\lambda_{a,t}\in \C$ denote the eigenvalue of the function $g_{a,t}$.
For any $a\in \N$, we can find a polynomial $Q_a$ with integer coefficients, such that $Q_a(m)=a^sP(m/a)$. 
Now, we define the polynomial Bohr$_0$ set (we make an arbitrary choice for the root $\lambda_{a,t}^{1/a^{2s}}$ here)
$${C}_{a,\ell}=\left\{m\in \N\colon \left|\left(\lambda_{a,t}^{\frac{1}{a^{2s}}}\right)^{Q_a(m)}-1\right|\leq \frac{1}{3\ell\sum_{t\leq t_a}|c_{a,t}|} \ \text{for all } t\in [t_a]\right\}.$$
Observe that since $Q_a$ takes integer values, we have $\left(\lambda_{a,t}^{\frac{1}{a^{2s}}}\right)^{Q_a(m)}=\lambda_{a,t}^{\frac{Q_a(m)}{a^{2s}}}$.
We infer that for any $m\in C_{a,\ell}$, we have 
\begin{equation}\label{eq_Bohr property of eigenvalues}
    \big|\lambda_{a,t}^{\frac{1}{a^s} P(\frac{m}{a})}-1\big|\leq\frac{1}{3\ell\sum_{t\leq t_a}|c_{a,t}|}
\end{equation}   for all admissible values of $t$.
The triangle inequality implies that for any $m\in {C}_{a,\ell}$, we have 
\begin{align*}
\big\| & T^{\frac{1}{a^s}P(\frac{m}{a})} F^{}_{a,\mathrm{str}}-  F^{}_{a,\mathrm{str}}\big\|_{L^2(\mu)}^2\\
&\leq \Big\|T^{\frac{1}{a^s}P(\frac{m}{a})}F_{a,\mathrm{str}}-\sum_{t\leq t_a}c_{a,t}T^{\frac{1}{a^s}P(\frac{m}{a})}g_{a,t}\Big\|_{L^2(\mu)}^2+\Big\|F_{a,\mathrm{str}}-\sum_{t\leq t_a}c_{a,t}g_{a,t}\Big\|^2_{L^2(\mu)}  \\
&\qquad \qquad \qquad  \qquad  \qquad \qquad \ \ \ \ + \Big\|\sum_{t\leq t_a} c_{a,t}T^{\frac{1}{a^s}P(\frac{m}{a})}g_{a,t}-\sum_{t\leq t_a}c_{a,t}g_{a,t}\Big\|_{L^2(\mu)}\\
&=\frac{2}{3\ell}+
\sum_{t\leq t_a}|c_{a,t}|\cdot\|\lambda_{a,t}^{\frac{1}{a^s}P(\frac{m}{a})} g_{a,t}-g_{a,t}\|_{L^2(\mu)}\ \\
&\leq 3\cdot \frac{1}{3\ell}=\frac{1}{\ell}.
\end{align*}
In these inequalities, we used \eqref{eq_approximation of structured by eigenfunctions} for the first two terms (note that $T$ is an isometry on $L^2(\mu)$), and for the third term we used that $g_{a,t}$ are eigenfunctions and \eqref{eq_Bohr property of eigenvalues}. 

Finally, we need the last relation to hold for all $a\in [\ell]$, so we  consider the intersection \begin{equation*}
   C_{\ell}=\bigcap_{a=1}^{\ell} C_{a,\ell}\cap (\ell!)^{s+1}\Z.
\end{equation*}
It follows that the set $C_{\ell}\in \text{PB}_0$ satisfies the desired properties
\eqref{eqn_bohr_set_divisibility_assumption} and \eqref{eqn_bohr_set_of_almost_periods}.

Now fix $\e>0$.
By the definition of $\limsup_{B\in\mathrm{PB}_0}$, we deduce that for all sets $C_{\ell}$ above we have 
\begin{equation*}
    \sup_{B\in \mathrm{PB}_0} \gamma_{B\cap C_{\ell}}\Big(m\mapsto \int F_m\, d\mu\Big)\geq \delta.
\end{equation*}
This implies that we can find a sequence
$B_{\ell}\subseteq C_{\ell}$ of polynomial Bohr$_0$ sets, such that \begin{equation}\label{eq_lower bound by delta/2 with mean}
\frac{\gamma\Big(m\mapsto \1_{B_{\ell}}(m)\int F_m\, d\mu \Big)}{\gamma(B_{\ell})}>\delta-\e.
\end{equation}for every $\ell\in \N$. This completes the construction of the sequence $(B_\ell)_{\ell\in\N}$.

Our next goal is to construct the \Folner\ sequence $(\Phi_M)_{M\in\N}$. 
Let $\B$ denote the shift-invariant $C^*$-algebra generated by 
\begin{enumerate}
    \item $\1_{B_{\ell}}$ for $\ell\in \N$ and
    \item  the sequence $m\mapsto \int F_m \,d\mu$.
\end{enumerate} 
We apply Lemma~\ref{L: mean on separable is Folner} for the mean $\gamma$ and the separable $C^*$-algebra $\B$ to obtain a \Folner\ sequence $(\Phi_{M})_{M\in \N}$, such that \begin{equation*}
    \gamma(m\mapsto f(m))=\lim_{M\to+\infty}\E_{m\in \Phi_M} f(m)\ \text{for every } f\in \B.
\end{equation*}
Since $\1_{B_{\ell}}$ and $m\mapsto \int F_m\, d\mu$ are elements of $\B$, we get
\begin{equation*}
    \frac{\gamma\left(m\mapsto \1_{B_{\ell}}(m)\int F_m \, d\mu\right)}{\gamma(B_{\ell})}=\frac{1}{\gamma(B_{\ell})}\lim_{M\to+\infty}\E_{m\in \Phi_M} \1_{B_{\ell}}(m)\int F_m\, d\mu.
\end{equation*}
Thus, \eqref{eq_lower bound by delta/2 with mean} implies \begin{equation}\label{eq_lower bound by delta/2}
   \lim_{M\to+\infty} \E_{m\in \Phi_{M}\cap B_{\ell}} \int F_m \,d\mu> \delta-\e
\end{equation}for every $\ell\in \N$.
This completes the construction of the \Folner{} sequence $(\Phi_M)_{M\in\N}$.

Note that by passing to a subsequence of $B_{\ell}$ and $\Phi_M$ if necessary (which doesn't affect the validity of \eqref{eq_lower bound by delta/2}), we may assume that the iterated limits \begin{align*}
    L:=
 &\lim_{\ell_2\to\infty}
\lim_{M_2\to\infty}
\lim_{\ell_1\to\infty}
\lim_{M_1\to\infty} \E_{a\in B_{\ell_2}\cap \Phi_{M_2}}  \E_{m\in B_{\ell_1}\cap \Phi_{M_1}}\ \int F_a\cdot T^{\frac{1}{a^s} P(\frac{m}{a})}F_a\cdot F_m\,d\mu
\end{align*}exist.
To make the presentation cleaner from now on, we will use the notation \begin{equation*}
    \lim_{(n_1,\ldots, n_k)\to\infty}a_{n_1,\ldots,n_k}
\end{equation*}for the sequence of iterated limits \begin{equation*}
    \lim_{n_1\to+\infty}\cdots \lim_{n_k\to+\infty}a_{n_1,\ldots,n_k}
\end{equation*}for a given multiply indexed sequence $a_{n_1,\ldots,n_k}:\N^k\to \C$, assuming that all limits in the previous expression exist.

We now apply the decomposition $F=F_{a,\mathrm{str}}+F_{a,\mathrm{unf}}$ to rewrite our averages as
\begin{align*}
L= \lim_{(\ell_2,M_2,\ell_1,M_1)\to\infty} \E_{a\in B_{\ell_2}\cap \Phi_{M_2}}&  \E_{m\in B_{\ell_1}\cap \Phi_{M_1}}
\int F_a\cdot T^{\frac{1}{a^s} P(\frac{m}{a})}F_a   \cdot F_m\, d\mu
\\
& =\Sigma_1+\Sigma_2,
\end{align*}
where
\begin{align*}
\Sigma_1&=  \lim_{(\ell_2,M_2,\ell_1,M_1)\to\infty} &
\E_{a\in B_{\ell_2}\cap \Phi_{M_2}}  \E_{m\in B_{\ell_1}\cap\Phi_{M_1}}
\int F_a\cdot T^{\frac{1}{a^s} P(\frac{m}{a})}F_{a,\mathrm{str}}\cdot F_m\, d\mu,
\\
\Sigma_2&=  \lim_{(\ell_2,M_2,\ell_1,M_1)\to\infty} &
\E_{a\in B_{\ell_2}\cap \Phi_{M_2}}  \E_{m\in B_{\ell_1}\cap \Phi_{M_1}}
 \int F_a \cdot T^{\frac{1}{a^s} P(\frac{m}{a})}F_{a,\mathrm{unf}}\cdot  F_m\,d\mu.
\end{align*}Here, we can pass to subsequences again if necessary, to ensure that the new iterated limits exist as well. 

We claim that $\Sigma_2=0$. 
Using the Cauchy-Schwarz inequality, we have
\begin{align*}
\Big|\E_{a\in B_{\ell_2}\cap  \Phi_{M_2}} &\E_{m\in B_{\ell_1}\cap \Phi_{M_1}} 
 \int F_a \cdot T^{\frac{1}{a^s} P(\frac{m}{a})}F_{a,\mathrm{unf}}\cdot  F_m\,d\mu \Big|^2
\\
&\leq \E_{m\in B_{\ell_1}\cap \Phi_{M_1}}
\int  \Big|\E_{a\in B_{\ell_2}\cap \Phi_{M_2}} F_a\cdot T^{\frac{1}{a^s} P(\frac{m}{a})} F_{a,\mathrm{unf}} \,d\mu\Big|^2
\\
&=\E_{a,b\in B_{\ell_2}\cap\Phi_{M_2}} \E_{m\in B_{\ell_1}\cap \Phi_{M_1}}
\int  F_a\cdot F_b \cdot T^{\frac{1}{a^s} P(\frac{m}{a})}F_{a,\mathrm{unf}}\cdot 
T^{\frac{1}{b^s} P(\frac{m}{b})}F_{b,\mathrm{unf}} \, d\mu.
\end{align*}

Note that for large enough $\ell_1$, $(ab)^{s+1}$ divides $m$ by the construction of the Bohr sets. Inserting the weight $1_{(ab)^{s+1}\mid m}$ into the average and doing the change of variables $m\to (ab)^{s+1}m$, it suffices to show that for any integer $c_0$, we have that \begin{equation*}
   \limsup_{M\to\infty}\E_{m\in B_{\ell_1}/(ab)^{s+1}\cap \Phi_{M}/(ab)^{s+1}} \int  F_a\cdot F_b \cdot T^{P_a(m)}F_{a,\mathrm{unf}}\cdot 
T^{P_b(m)}F_{b,\mathrm{unf}} \, d\mu
\end{equation*}converges to zero for every $\ell_1\in \N$, where $P_a,P_b$ are polynomials with integer coefficients defined by $P_a(m)=\frac{1}{a^s}P(a^{s}b^{s+1}m)$ and $P_b(m)=\frac{1}{b^s}P(a^{s+1}b^{s}m)$.

Since $B_{\ell_1}$ is a polynomial Bohr$_0$ set, a standard approximation argument shows that it suffices to establish the estimate \begin{equation}\label{eq_final average with weak mixing}
      \sup_{\theta\in \T} \limsup_{M\to\infty}\E_{m\in \frac{\Phi_{M}}{(ab)^{s+1}}}  e\left( Q(m)\theta\right)\int  H \cdot T^{P_a(m)}F_{a,\mathrm{unf}}
T^{P_b(m)}F_{b,\mathrm{unf}}\, d\mu=0
\end{equation}for any function $H\in L^{\infty}(\mu)$, $\theta\in \T$ and any polynomial $Q$ with integer coefficients.
Notice now that the assumptions of Proposition \ref{prop_optimal factor for 2 polynomials} are satisfied (since $\Phi_{M}/(ab)^{s+1}$ forms a \Folner\ sequence and $P_a, P_b$ are essentially distinct as they have distinct leading coefficients), and thus, \eqref{eq_final average with weak mixing} follows.

To conclude the proof, we show that the remaining term $\Sigma_1$ yields the desired lower bound. For this, recall that the original functions $F_a$ are non-negative, which also implies that the same holds for the functions $F_{a,\mathrm{str}}$. In particular, by passing to subsequences, if necessary, for the limit below to exist, we have the lower bound 
\begin{multline*}
    L=\Sigma_1\geq\\
    \lim_{(\ell_2,M_2,\ell_1,M_1)\to\infty} 
\E_{a\in B_{\ell_2}\cap \Phi_{M_2}}  \E_{m\in B_{\ell_1}\cap \Phi_{M_1}}
\int F_a\cdot T^{\frac{1}{a^s} P(\frac{m}{a})} F_{a,\mathrm{str}}\cdot  F_m\cdot F_{m, \mathrm{str}}\, d\mu.
\end{multline*}
Using \eqref{eqn_bohr_set_of_almost_periods} for every $m\in B_{\ell_1}\subseteq C_{\ell_1}$ and noticing that the variable $m$ is very large compared to the numbers $a\in \Phi_{M_2}$, we have
\begin{equation}\label{eq_only structured parts remaining}
  L\geq  \lim_{(\ell_2,M_2,\ell_1,M_1)\to\infty}
\E_{a\in B_{\ell_2}\cap \Phi_{M_2}}  \E_{m\in B_{\ell_1}\cap \Phi_{M_1}}\int 
F_a\cdot F_{a,\mathrm{str}} \cdot F_m\cdot F_{m, \mathrm{str}}\, d\mu.
\end{equation}

Next, consider the function
\[
\varphi(\ell_2,M_2,\ell_1,M_1)=
\E_{a\in B_{\ell_2}\cap \Phi_{M_2}}  \E_{m\in B_{\ell_1}\cap \Phi_{M_1}}\int 
F_a\cdot F_{a,\mathrm{str}} \cdot F_m\cdot F_{m, \mathrm{str}}\, d\mu.
\]

Using Ramsey's theorem for $4$-sets and replacing $(B_\ell)_{\ell\in\N}$ and $(M_j)_{j\in\N}$ with subsequences of themselves if necessary, we can assume that there exists some $C_1\in[0,1]$ such that\begin{equation}\label{eq_first ramsey aprroximation}
    |\varphi(\ell_2,M_2,\ell_1,M_1)-C_1|\leq\epsilon
\end{equation}
for all $M_1,\ell_1,M_2,\ell_2$ with $M_1>\ell_1>M_2>\ell_2$.
Using Ramsey's theorem for $2$-sets and replacing $(B_\ell)_{\ell\in\N}$ and $(M_j)_{j\in\N}$ by subsequences once more, we can also assume without loss of generality that there exists some $C_2\in[0,1]$ such that
\begin{equation}\label{eq_second ramsey approximation}
    \Big|\E_{m\in B_{\ell}\cap \Phi_M}  
 \int F_m\cdot F_{m, \mathrm{str}}\, d\mu -C_2\Big|\leq\epsilon
\end{equation}
for all $M,\ell$ with $M>\ell$. 
Using the Cauchy-Schwarz inequality, we obtain for any $Q\in \N$
\begin{align*}
C_2^2&=
\Big( \E_{q\in [Q]}\E_{\ell\in(2^q,2^{q+1}]}\E_{M\in(2^{q+1},2^{q+2}]}  \E_{m\in B_{\ell}\cap \Phi_{M}}  
 \int F_m\cdot F_{m, \mathrm{str}}\, d\mu \Big)^2+\Oh(\epsilon^2)
 \\
 &\leq
\int 
\Big(\E_{q\in [Q]}\E_{\ell\in(2^q,2^{q+1}]}\E_{M\in(2^{q+1},2^{q+2}]}  \E_{m\in B_{\ell}\cap \Phi_M}\     
   F_m\cdot F_{ m,\mathrm{str}}\Big)^2\,d\mu+\Oh(\epsilon^2)
   \\&=
2\E_{\substack{p,q\in [Q]\\ p+1<q}}
\E_{\substack{\ell_2\in(2^p,2^{p+1}]\\ M_2\in(2^{p+1},2^{p+2}]}}
\E_{\substack{\ell_1\in(2^q,2^{q+1}]\\ M_1\in(2^{q+1},2^{q+2}]}}\,
\varphi(\ell_2,M_2,\ell_1,M_1)+\Oh\left(\epsilon^2+\frac{1}{Q}\right)
\\
&=
C_1+\Oh\left(\epsilon^2+\frac{1}{Q}\right).
\end{align*}
Taking $Q$ large enough compared to $\e^{-1}$ and combining \eqref{eq_first ramsey aprroximation} and \eqref{eq_second ramsey approximation}, this proves that
\begin{align*}
L&\geq \lim_{(\ell_2,M_2,\ell_1,M_1)\to\infty}
\E_{a\in B_{\ell_2}\cap \Phi_{M_2}}  \E_{m\in B_{\ell_1}\cap \Phi_{M_1}}\int 
F_a\cdot F_{a,\mathrm{str}} \cdot F_m\cdot F_{m, \mathrm{str}}\, d\mu
\\
& \geq \Big(\lim_{(\ell,M)\to\infty}
\E_{m\in B_{\ell}\cap \Phi_{M_1}}\int F_m\cdot F_{m, \mathrm{str}}\, d\mu
\Big)^2+\Oh(\epsilon^2).
\end{align*}Now note that if we decompose $F_m$ into its structured and uniform parts, we will have $$\int F_{m,\mathrm{unf}}\cdot F_{m,\mathrm{str}}\, d\mu=0$$by orthogonality. We deduce that 
\begin{align*}
L&\geq \Big(\lim_{(\ell,M)\to\infty}\E_{m\in B_{\ell}\cap \Phi_{M_1}}\int F_m\cdot F_{m, \mathrm{str}}\, d\mu
\Big)^2+\Oh(\epsilon^2)
\\
& \geq \Big(\lim_{(\ell,M)\to\infty}
\E_{m\in B_{\ell}\cap\Phi_{M}}\int F_{m, \mathrm{str}}^2\, d\mu
\Big)^2+\Oh(\epsilon^2)\\
&\geq \Big(\lim_{(\ell,M)\to\infty}
\E_{m\in B_{\ell}\cap \Phi_{M}}\int F_{m, \mathrm{str}}\, d\mu
\Big)^4+\Oh(\epsilon^2),
\end{align*}where the last inequality is proven via iterated applications of Cauchy-Schwarz. Now, we observe that $\int F_m\,\,d\mu=\int F_{m,\mathrm{str}}\, d\mu$, which follows from the fact that the weak-mixing part of any bounded measurable function has integral equal to zero (as it is orthogonal to the constant functions which are $T$-invariant). Combining this with the lower bound in \eqref{eq_lower bound by delta/2}, we conclude that   
\begin{align*}
L
\geq 
\Big(\lim_{(\ell,M)\to\infty}
\E_{m\in B_{\ell}\cap \Phi_{M}}\int F_{m}\, d\mu
\Big)^4+\Oh(\e^2)=\delta^4+\Oh(\e^2).
\end{align*}
Adjusting $\e$ accordingly, the claimed result follows.
\end{proof}

\bibliographystyle{aomalphanomr}
\bibliography{references}




\bigskip
\footnotesize
\noindent
Florian K.\ Richter\\
\textsc{{\'E}cole Polytechnique F{\'e}d{\'e}rale de Lausanne (EPFL)}\\
\href{mailto:f.richter@epfl.ch}
{\texttt{f.richter@epfl.ch}}

\bigskip
\footnotesize
\noindent
Konstantinos Tsinas\\
\textsc{{\'E}cole Polytechnique F{\'e}d{\'e}rale de Lausanne (EPFL)}\\
\href{mailto:konstantinos.tsinas@epfl.ch}
{\texttt{konstantinos.tsinas@epfl.ch}}

\end{document}